\newtheorem{theorem}{Theorem}[section]
\newtheorem{lemma}[theorem]{Lemma}
\newtheorem{corollary}[theorem]{Corollary}
\newtheorem{remark}[theorem]{Remark}
\newtheorem{example}[theorem]{Example}
\newtheorem{definition}[theorem]{Definition}
\renewcommand\proofname{\bf{Proof}}
\title{\bf Combinatorial Laplacians and Relative Homology of Complex Pairs}
\author{
Xiongfeng Zhan$^a$,  \ Xueyi Huang$^{a,}$\thanks{Corresponding author.},  \  Lu Lu$^b$ \setcounter{footnote}{-1}\footnote{\textit{Email address:} zhanxfmath@163.com (X. Zhan), huangxy@ecust.edu.cn (X. Huang), lulumath@csu.edu.cn (L. Lu).}\\[2mm]
	\small $^a$School of Mathematics, East China University of Science and Technology, \\
	\small  Shanghai 200237, P. R. China\\
	\small $^b$School of Mathematics and Statistics, Central South University,\\
	\small Changsha, Hunan, 410083, P. R. China
}
\date{}
\begin{document}
	\maketitle
	\begin{abstract}

As a discretization of the Hodge Laplacian, the combinatorial Laplacian of simplicial complexes has garnered significant attention. In this paper, we study combinatorial Laplacians for complex pairs $(X, A)$, where $A$ is a subcomplex of a simplicial complex $X$. We establish a relative version of the matrix-tree theorem for complex pairs, which generalizes both the matrix-tree theorem for simplicial complexes proved by Duval, Klivans, and Martin (2009) and the result for Dirichlet eigenvalues of graph pairs by Chung (1996). Furthermore, we derive several lower bounds for the spectral gaps of complex pairs and characterize the equality case for one sharp lower bound. As by-products, we obtain sufficient conditions for the vanishing of relative homology. Our results demonstrate that the combinatorial Laplacians for complex pairs are closely related to relative homology.

		\par\vspace{2mm}
		
		\noindent{\bfseries Keywords:} simplicial complex, combinatorial Laplacian, matrix-tree theorem, Dirichlet eigenvalue, relative homology
		\par\vspace{1mm}
		
		\noindent{\bfseries 2010 MSC:} 05E45, 55U10
	\end{abstract}

	\section{Introduction}\label{section::1}
	
	A \emph{simplicial complex} $X$ is a collection of finite sets that is closed under set inclusion; that is, if $\sigma \in X$ and $\tau \subseteq \sigma$, then $\tau \in X$. A finite set $\sigma$ in $X$ is called a \emph{face} or a \emph{simplex} of $X$, and its \emph{dimension} is defined as $\dim \sigma = |\sigma| - 1$. The union of all faces of $X$ is called the \emph{vertex set} of $X$, denoted by $V(X)$.  Note that the empty set $\emptyset$ is always a face of $X$, and its dimension is $-1$. The \emph{dimension} of $X$, denoted by $\dim X$, is the maximum dimension among all its faces. The faces that are maximal under inclusion are called \emph{facets}. A simplicial complex is \emph{pure} if all its facets have the same dimension. 

A \textit{subcomplex} of $X$ is a subset of $X$ that is itself a simplicial complex. For any integer $p \leq \dim X$, the $p$-dimensional \textit{skeleton} of $X$, denoted by $X_{(p)}$, is the subcomplex consisting of all faces of dimension at most $p$. In particular, we denote by $G_X$ the $1$-skeleton of $X$, viewed as a graph. A \textit{missing face} of $X$ is a subset $\sigma \subseteq V(X)$ such that $\sigma \notin X$ but every proper subset $\tau \subsetneq \sigma$ belongs to $X$. The maximal dimension of a missing face of $X$ is denoted by $h(X)$. Let $X_k$ denote the set of all $k$-dimensional faces of $X$. For any $\sigma \in X_k$, the \textit{degree} of $\sigma$ in $X$ is defined as 
$\deg_X(\sigma) = \bigl|\{\eta \in X_{k+1} \mid \sigma \subseteq \eta\}\bigr|$,
and we denote by $\sigma_{k-1}$ the set $\{\tau \in X_{k-1} \mid \tau \subseteq \sigma\}$.

Let $X$ and $Y$ be two simplicial complexes. A \textit{simplicial map} from $X$ to $Y$ is a function $f \colon V(X) \to V(Y)$ such that $f(\sigma) \in Y$ for all $\sigma \in X$. We say that $X$ and $Y$ are \textit{isomorphic}, denoted by $X \cong Y$, if there exists a bijection $f \colon V(X) \to V(Y)$ such that both $f$ and $f^{-1}$ are simplicial maps. Furthermore, if $X$ and $Y$ have disjoint vertex sets, then their \textit{join} is the simplicial complex 
$X * Y = \{\sigma \cup \tau \mid \sigma \in X, \tau \in Y\}$. We write $X * X$ for the join of $X$ with a disjoint copy of itself, and $X^{*k}$ for the $k$-fold join $X * X * \cdots * X$ ($k$ times).
	
	Let $X$ be a finite $d$-dimensional simplicial complex, and let $R$ be a commutative ring
with identity. For $-1 \leq k \leq d$, let $C_k(X;R)$ denote the $k$-th chain group of $X$ with coefficients in $R$. We denote the boundary and coboundary maps respectively by 
	\[
	\partial_{k}(X;R):C_k(X;R)\rightarrow C_{k-1}(X;R)\text{ and }
	\partial_{k}^*(X;R):C_{k-1}(X;R)\rightarrow C_{k}(X;R),
	\]
where $\partial_{i}^*(X;R)$ is the adjoint operator of $\partial_{i}(X;R)$ with respect to the natural inner product. Since $\partial_k \circ \partial_{k+1} = 0$, the quotient module $\widetilde{H}_k(X;R)=\ker \partial_k / \operatorname{im} \partial_{k+1}$ is called  the \textit{$k$-th (reduced) homology group} of $X$ over $R$. When $R=\mathbb{R}$, the \textit{$k$-dimensional (reduced) Laplacian} of $X$ is  defined by
$$L_k(X)=\partial_{k+1}(X;\mathbb{R})\partial_{k+1}^*(X;\mathbb{R})+\partial_{k}^*(X;\mathbb{R})\partial_{k}(X;\mathbb{R}),$$
and its smallest eigenvalue,  denoted by $\mu_k(X)$, is called the \textit{$k$-th spectral gap} of $X$.

The combinatorial Laplacian, introduced by Eckmann \cite{Eck44} as a discrete analogue of the Hodge Laplacian, was originally defined for cellular complexes---a more general class than simplicial complexes. In this foundational work, Eckmann proved the discrete Hodge theorem. Below we present the simplicial version of this result, which has become the most widely used formulation in applications.

\begin{theorem}[Simplicial Hodge theorem]\label{thm::1.1}
    For a simplicial complex $X$, we have
    \[\widetilde{H}_k(X;\mathbb{R})\cong \ker L_k(X).\]
\end{theorem}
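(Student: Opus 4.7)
The plan is to obtain the isomorphism via the standard Hodge-type orthogonal decomposition of $C_k(X;\mathbb{R})$, exploiting that everything happens in a finite-dimensional real inner product space where adjoints behave well.

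First I would show that $\ker L_k(X)$ coincides with the space of \emph{harmonic $k$-chains}, namely $\ker\partial_k(X;\mathbb{R})\cap\ker\partial_{k+1}^*(X;\mathbb{R})$. For any $c\in C_k(X;\mathbb{R})$,
\[
\langle L_k(X)c,c\rangle=\langle\partial_{k+1}\partial_{k+1}^*c,c\rangle+\langle\partial_k^*\partial_k c,c\rangle=\|\partial_{k+1}^*c\|^2+\|\partial_k c\|^2,
\]
so $L_k(X)c=0$ (equivalently, since $L_k(X)$ is positive semidefinite, $\langle L_k(X)c,c\rangle=0$) forces both $\partial_{k+1}^*c=0$ and $\partial_k c=0$; the reverse implication is immediate from the definition of $L_k(X)$.

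Next I would set up the orthogonal decomposition. Using $\partial_k\partial_{k+1}=0$ and taking adjoints $\partial_{k+1}^*\partial_k^*=0$, I get $\operatorname{im}\partial_{k+1}\perp\operatorname{im}\partial_k^*$: indeed $\langle\partial_{k+1}b,\partial_k^*a\rangle=\langle\partial_k\partial_{k+1}b,a\rangle=0$. Combining this with the standard identities $(\operatorname{im}\partial_{k+1})^\perp=\ker\partial_{k+1}^*$ and $(\operatorname{im}\partial_k^*)^\perp=\ker\partial_k$ in the finite-dimensional inner product space $C_k(X;\mathbb{R})$, I obtain the Hodge decomposition
\[
C_k(X;\mathbb{R})=\operatorname{im}\partial_{k+1}(X;\mathbb{R})\ \oplus\ \ker L_k(X)\ \oplus\ \operatorname{im}\partial_k^*(X;\mathbb{R}),
\]
with the three summands pairwise orthogonal; the middle summand is the harmonic part identified in the previous step.

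Finally, intersecting the decomposition with $\ker\partial_k(X;\mathbb{R})$ gives $\ker\partial_k(X;\mathbb{R})=\operatorname{im}\partial_{k+1}(X;\mathbb{R})\oplus\ker L_k(X)$, since $\operatorname{im}\partial_{k+1}\subseteq\ker\partial_k$ and $\ker L_k\subseteq\ker\partial_k$, while $\operatorname{im}\partial_k^*\cap\ker\partial_k=\{0\}$ (any $c=\partial_k^*a$ with $\partial_k c=0$ satisfies $\|c\|^2=\langle a,\partial_k c\rangle=0$). Quotienting by $\operatorname{im}\partial_{k+1}(X;\mathbb{R})$ yields the desired isomorphism $\widetilde{H}_k(X;\mathbb{R})\cong\ker L_k(X)$. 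The only technical point is the orthogonality of $\operatorname{im}\partial_{k+1}$ and $\operatorname{im}\partial_k^*$, but this drops out of $\partial\circ\partial=0$ together with the adjoint relation; no genuine obstacle arises since the ambient space is finite-dimensional.
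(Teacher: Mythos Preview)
Your proof is correct and follows essentially the same approach as the paper: the paper does not prove Theorem~\ref{thm::1.1} directly (it is cited as Eckmann's result), but its proof of the relative version (Theorem~\ref{thm::1.2}) proceeds by the same identification $\ker L_k=\ker\partial_{k+1}^*\cap\ker\partial_k=(\operatorname{im}\partial_{k+1})^\perp\cap\ker\partial_k\cong H_k$. You additionally spell out the full three-term Hodge decomposition of $C_k$, which is more than strictly necessary but entirely standard and correct.
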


According to Theorem \ref{thm::1.1}, the vanishing of the $k$-th (reduced) homology group $\widetilde{H}_k(X;\mathbb{R})$ is equivalent to the positivity of the $k$-th spectral gap $\mu_k(X)$. This profound connection between combinatorial structures and topological invariants has motivated extensive research on combinatorial Laplacians for simplicial complexes. Over the past several decades, the spectra of combinatorial Laplacians have been investigated for various well-known families of simplicial complexes \cite{DW02,FH98,DR02,KRS02} as well as random complexes \cite{GW16,SY20}. A general framework for combinatorial Laplacians was established in \cite{HJ13}. Notably, several classical graph-theoretic results, including the matrix-tree theorem and Cheeger inequalities, have been successfully extended to simplicial complexes through combinatorial Laplacians; see \cite{Adi92,Kal83,DKM09,DKM11,DKM15} and \cite{GS15,JZ25,PRT16,SKM14}, respectively. For further developments on combinatorial Laplacians of simplicial complexes, we refer the reader to \cite{ABM05,Den01,HJ13+,Lew20,Lew20+,Lew23,Lim20,SBHLJ20,ZHL26}.

	Given a subcomplex $A$ of $X$, the pair $(X,A)$ is called a \textit{complex pair}. For a complex pair $(X,A)$, let $C_k(X,A;R)$ denote the quotient module $C_k(X;R)/C_k(A;R)$ for $-1 \leq k \leq d$. The boundary map $\partial_k(A;R): C_k(A;R) \rightarrow C_{k-1}(A;R)$ naturally induces a quotient map $\partial_k(X,A;R): C_k(X,A;R) \rightarrow C_{k-1}(X,A;R)$, called the \textit{relative boundary map}. The \textit{relative coboundary map} $\partial_k^*(X,A;R): C_{k-1}(X,A;R) \rightarrow C_{k}(X,A;R)$ is the adjoint operator of $\partial_k(X,A;R)$ with respect to the natural inner product. Since $\partial_k \circ \partial_{k+1} = 0$, the quotient module $H_k(X,A;R) = \ker \partial_k / \operatorname{im} \partial_{k+1}$ is called the \textit{$k$-th relative homology group} of $(X,A)$ over $R$.  The \textit{$k$-th relative Betti number} $\beta_k(X,A;R)$ is the rank of the largest free $R$-module summand of $H_k(X,A;R)$. When $R = \mathbb{Z}$, we simply write $H_k(X,A)$ and $\beta_k(X,A)$ instead of $H_k(X,A;\mathbb{Z})$ and $\beta_k(X,A;\mathbb{Z})$, respectively. When $R = \mathbb{R}$, the \textit{$k$-dimensional relative Laplacian} of the complex pair $(X,A)$ is defined by 
\[
L_k(X,A) = \partial_{k+1}(X,A;\mathbb{R}) \partial_{k+1}^*(X,A;\mathbb{R}) + \partial_{k}^*(X,A;\mathbb{R}) \partial_{k}(X,A;\mathbb{R}),
\] 
and its smallest eigenvalue, denoted by $\mu_k(X,A)$,  is called the \textit{$k$-th spectral gap} of $(X,A)$.  Furthermore, the \textit{up-down part} $\partial_{k+1}(X,A;\mathbb{R}) \partial_{k+1}^*(X,A;\mathbb{R})$ of $L_k(X,A)$ is denoted by $L^{\operatorname{ud}}_k(X,A)$. Note that if $A = \{\emptyset\}$, then $L_k(X,A) = L_k(X)$ for all $k \geq 1$.

The combinatorial Laplacians of complex pairs were first used by Duval \cite{Duv05} to provide an elegant expression for the error term in the recursion formula for the spectrum polynomial of a matroid. In the same work, Duval showed that the Laplacian eigenvalues of any pair of shifted complexes can be expressed in terms of the conjugate partition of the degree sequence of the shifted complex pair. Further results on combinatorial Laplacians of complex pairs can be found in \cite{Duv06,HJ13+}.

As is well-known, the combinatorial Laplacians and discrete Hodge theorem can be naturally extended to finite-dimensional chain complexes over $\mathbb{R}$ (see \cite[Proposition 2.1]{Fri98}). Since every complex pair naturally induces a chain complex (see Section~\ref{section::2} for details), we have the following relative version of the simplicial Hodge theorem.

\begin{theorem}[Relative simplicial Hodge theorem]\label{thm::1.2}
    For a complex pair $(X,A)$, we have
    \[ H_k(X,A;\mathbb{R}) \cong \ker L_k(X,A). \]
\end{theorem}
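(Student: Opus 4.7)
The plan is to derive Theorem \ref{thm::1.2} as a direct specialization of the general Hodge theorem for finite-dimensional chain complexes over $\mathbb{R}$ equipped with inner products, which the authors have already flagged as the appropriate abstract framework (Friedman, Proposition 2.1). Concretely, I would first verify that the relative data assemble into such a chain complex, and then invoke the abstract theorem.

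First, I would check that $(C_\bullet(X,A;\mathbb{R}),\partial_\bullet(X,A;\mathbb{R}))$ is indeed a finite-dimensional chain complex. Each $C_k(X,A;\mathbb{R})$ is finite-dimensional since $X$ is finite, and the relative chain condition $\partial_k(X,A;\mathbb{R})\circ\partial_{k+1}(X,A;\mathbb{R})=0$ follows by passing to the quotient: since $\partial_k\circ\partial_{k+1}=0$ on $C_\bullet(X;\mathbb{R})$, the induced maps on quotients by $C_\bullet(A;\mathbb{R})$ also compose to zero. Next, each $C_k(X,A;\mathbb{R})$ inherits a natural inner product, obtained by identifying the quotient $C_k(X;\mathbb{R})/C_k(A;\mathbb{R})$ with the orthogonal complement of $C_k(A;\mathbb{R})$ inside $C_k(X;\mathbb{R})$; in this identification, the simplices of $X\setminus A$ form an orthonormal basis, and $\partial_k^*(X,A;\mathbb{R})$ is, as stipulated in the definition, the adjoint of $\partial_k(X,A;\mathbb{R})$ with respect to this inner product.

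With these verifications in place, I would apply the general Hodge decomposition: for any such inner-product chain complex $(V_\bullet,d_\bullet)$ with $d_k^*$ the adjoint of $d_k$ and $L_k=d_{k+1}d_{k+1}^*+d_k^*d_k$, one has the orthogonal decomposition
\[
V_k = \operatorname{im} d_{k+1}\;\oplus\;\operatorname{im} d_k^*\;\oplus\;\bigl(\ker d_k\cap \ker d_{k+1}^*\bigr),
\]
together with the identity $\ker L_k = \ker d_k\cap\ker d_{k+1}^*$ (since $\langle L_k x,x\rangle = \|d_{k+1}^*x\|^2+\|d_k x\|^2$). The composition $\ker L_k\hookrightarrow\ker d_k\twoheadrightarrow \ker d_k/\operatorname{im} d_{k+1}$ is then an isomorphism: surjectivity uses that every cycle differs from a harmonic representative by a boundary (via the decomposition), and injectivity uses that $\operatorname{im} d_{k+1}\perp\ker d_{k+1}^*$. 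Specializing $V_\bullet=C_\bullet(X,A;\mathbb{R})$ and $d_\bullet=\partial_\bullet(X,A;\mathbb{R})$ yields $H_k(X,A;\mathbb{R})\cong\ker L_k(X,A)$.

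There is no substantive obstacle here: the statement is a bookkeeping consequence of the abstract Hodge theorem once the relative chain complex and its inner-product structure are in place. The only point requiring care is to be explicit that the adjoint defining $\partial_k^*(X,A;\mathbb{R})$ is taken with respect to the quotient inner product (equivalently, the inner product on $C_k(A;\mathbb{R})^{\perp}$ induced from $C_k(X;\mathbb{R})$), so that the abstract theorem applies verbatim.
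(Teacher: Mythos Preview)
Your proposal is correct and follows essentially the same route as the paper: both arguments reduce to showing $\ker L_k(X,A)=\ker\partial_k\cap\ker\partial_{k+1}^*=(\operatorname{im}\partial_{k+1})^{\perp}\cap\ker\partial_k\cong H_k(X,A;\mathbb{R})$, with the paper writing this out directly (using positive semidefiniteness of the two summands) while you phrase it as an instance of Friedman's abstract Hodge theorem. The only cosmetic difference is that you make the inner-product/adjoint setup more explicit, but the mathematical content is identical.
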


In this paper, we study combinatorial Laplacians for complex pairs, with our main contributions focusing on two aspects: establishing a relative version of the matrix-tree theorem and deriving lower bounds for the spectral gaps of complex pairs.

First, we present a relative version of the matrix-tree theorem for complex pairs. We begin with the definition of relative spanning forests (and trees) for complex pairs.

\begin{definition}\label{def::3.1}
\rm
    Let $(X,A)$ be a complex pair with $\dim X = d$, and let $\Upsilon$ be a simplicial complex such that $A_{(k)} \subseteq \Upsilon \subseteq X_{(k)}$, where $k \leq d$. We say $\Upsilon$ is a \textit{$k$-dimensional relative spanning forest} of $(X,A)$ if $\Upsilon_{(k-1)} = X_{(k-1)}$ and the following three conditions hold:
    \begin{enumerate}[(a)]\setlength{\itemsep}{0pt}
        \item $\beta_k(\Upsilon,A_{(k)}) = 0$,
        \item $\beta_{k-1}(\Upsilon,A_{(k)}) = \beta_{k-1}(X,A)$, and
        \item $f_k(\Upsilon,A_{(k)}) = f_k(X,A) - \beta_k(X_{(k)},A_{(k)})$.
    \end{enumerate}
  Moreover, we say $\Upsilon$ is a \textit{$k$-dimensional relative spanning tree} of $(X,A)$ if (b) is replaced by 
    \begin{enumerate}\setlength{\itemsep}{0pt}
        \item[(b$^*$)] $\beta_{k-1}(\Upsilon,A_{(k)}) = 0$.
    \end{enumerate}
The set of $k$-dimensional relative spanning forests (resp. trees) of the pair $(X,A)$ is denoted by $\mathcal{F}_k(X,A)$ (resp. $\mathcal{T}_k(X,A)$).
\end{definition}

\begin{remark}\rm
Note that when $k \geq 1$ and $A = \{\emptyset\}$, the notions of relative spanning trees and relative spanning forests coincide with the definitions of spanning trees and spanning forests for simplicial complexes in \cite{DKM09,DKM15}. Moreover, when $k = 1$, our definition agrees with that given in \cite{Chu96} for graph pairs.
\end{remark}

Under Definition \ref{def::3.1}, we obtain the following result that enumerates relative spanning trees weighted by the squares of the orders of their relative homology groups for complex pairs, expressed in terms of the eigenvalues of relative Laplacians.

\begin{theorem}[Relative matrix-tree theorem]\label{thm::main1}
    Let $(X,A)$ be a complex pair with $\dim X = d$. Then $\mathcal{F}_k(X,A) \neq \emptyset$ and $\mathcal{T}_k(X,A) \subseteq \mathcal{F}_k(X,A)$ for $0 \leq k \leq d$. Moreover, $\mathcal{T}_k(X,A) \neq \emptyset$ if and only if $\beta_{k-1}(X,A) = 0$. In this case, let $\beta = \beta_{k-2}(X,A)$, then:
    \begin{enumerate}\setlength{\itemsep}{0pt}
        \item[(i)] The product of all nonzero eigenvalues of $L_{k-1}^{\operatorname{ud}}(X,A)$ is equal to 
        \begin{equation*}
            \sum_{\Upsilon \in \mathcal{T}_k(X,A)} |H_{k-1}(\Upsilon,A_{(k)})|^2 \cdot \sum_{\Gamma \in \mathcal{F}_{k-1}(X,A)} \frac{|H_{k-2}(\Gamma,A_{(k-1)})/\mathbb{Z}^{\beta}|^2}{|H_{k-2}(X,A)/\mathbb{Z}^{\beta}|^2}.
        \end{equation*}
        \item[(ii)] For any $\Gamma \in \mathcal{F}_{k-1}(X,A)$,
        \begin{equation*}
            \sum_{\Upsilon \in \mathcal{T}_k(X,A)} |H_{k-1}(\Upsilon,A_{(k)})|^2 = \frac{|H_{k-2}(X,A)/\mathbb{Z}^{\beta}|^2}{|H_{k-2}(\Gamma,A_{(k-1)})/\mathbb{Z}^{\beta}|^2} \det L_{k-1}^{\operatorname{ud}}(X,\Gamma).
        \end{equation*}
    \end{enumerate}
\end{theorem}

\begin{remark}\rm
    When $k \geq 1$ and $A = \{\emptyset\}$, Theorem \ref{thm::main1} reduces to the matrix-tree theorem for simplicial complexes established by Duval, Klivans, and Martin \cite{DKM09} (see also \cite{DKM11,DKM15} for generalizations to cellular complexes). For the case $k = 1$, Theorem \ref{thm::main1} specializes to the matrix-tree theorem for Dirichlet eigenvalues (i.e., the eigenvalues of graph Laplacians with Dirichlet boundary conditions) for graph pairs \cite[Theorem 3]{Chu96}. Moreover, when $k = 1$ and $A = \{\emptyset\}$, Theorem \ref{thm::main1} recovers the classical matrix-tree theorem for graphs.
\end{remark}

Second, we derive several lower bounds for the spectral gaps of complex pairs and characterize the equality case for one sharp lower bound. As by-products, we obtain sufficient conditions for the vanishing of relative homology. 

Let $\Delta^m$ denote the complete simplicial complex on $m+1$ vertices.  In 2020, Lew \cite{Lew20} established a lower bound for the $k$-th spectral gap $\mu_k(X)$ in terms of the minimum degree of $\sigma\in X_k$ and the maximum dimension of a missing face of $X$, and proposed a conjecture regarding the unique simplicial complex that attains this lower bound. Very recently, Zhan, Huang, and Lin \cite{ZHL26} confirmed Lew's conjecture. The main result of these works is stated as follows.
	
	\begin{theorem}[{\cite[Theorem 1.1, Conjecture 5.1]{Lew20}, \cite[Theorem 1.3]{ZHL26}}]\label{thm::Lew}
		Let $X$ be a simplicial complex on the vertex set $V$ of size $n$ with $h(X) = h$. Then for all $k \geq -1$,
\begin{equation*}
    \mu_k(X) \geq (h + 1)\min_{\sigma \in X_k} \deg_X(\sigma) + (h+1)(k + 1) - hn \geq (h + 1)(k + 1) - hn.
\end{equation*}
Moreover, equality $\mu_k(X) = (h + 1)(k + 1) - hn$ holds if and only if
\begin{equation*}
    X \cong \left(\Delta^{h}_{(h - 1)}\right)^{*(n - k - 1)} * \Delta^{(h + 1)(k + 1) - hn - 1}.
\end{equation*}
	In this case, $\dim X=k$ and every eigenvector of $L_k(X)$ corresponding to $\mu_k(X)$ has no zero entries.
	\end{theorem}

Let $X$ be a finite $d$-dimensional simplicial complex. We define
\begin{equation*}
    B(X) = \{\sigma \in X : \sigma \subseteq \tau \text{ for some } \tau \in X_{d-1} \text{ with } \deg_{X}(\tau) \leq 1\}.
\end{equation*}
Clearly, $B(X)$ is a $(d-1)$-dimensional subcomplex of $X$. Note that if $X$ is a triangulation of a manifold $\mathcal{X}$ with boundary $\partial\mathcal{X}$, then $B(X)$ corresponds to $\partial\mathcal{X}$. This observation motivates the definition of a discrete boundary for simplicial complexes.

For any $k\leq d$, we say that a subcomplex $A \subseteq X$ is a \textit{$k$-th discrete boundary} of $X$ if it satisfies $\deg_X(\sigma) \leq 1$ for all $\sigma \in A_{k-1}$. Note that every $k$-th discrete boundary of $X$ has dimension at most $k$, and every subcomplex of $X$ with dimension at most $k-2$ is automatically a $k$-th discrete boundary of $X$. Furthermore, we observe that the subcomplex $B(X)$ is a $d$-th discrete boundary of $X$, and the $1$-th discrete boundaries of $X$ coincide with the boundaries of the Steklov eigenvalues of $G_X$ \cite{HH22,LZ25}. For a given $k$-th discrete boundary $A$ of $X$, we define
\begin{equation}\label{equ::26}
X' = \left\{\sigma \mid \sigma \subseteq \tau \text{ for some } \tau \in (X_k \setminus A_k) \cup \left(\cup_{i \geq k+1} X_i\right)\right\}.
\end{equation}
One can readily verify that $X'$ constitutes a well-defined subcomplex of $X$.  

Building upon  Theorem \ref{thm::Lew}, we obtain the following lower bound for the spectral gap $\mu_k(X,A)$.

	 \begin{theorem}\label{thm::main2}
	 Let $X$ be a simplicial complex, and let $A$ be a $k$-th discrete boundary of $X$.  Let $X'$ be the subcomplex of $X$ defined by \eqref{equ::26}. Then, for any $k\geq 1$, 
\begin{equation*}
\begin{aligned}
\mu_k(X, A) &\geq \min_{\sigma\in X'_k}\left((h'+1)\deg_{X'}(\sigma)-|\sigma_{k-1}\cap A_{k-1}|\right)+(h'+1)(k+1)-h'n'\\
&\geq (h' + 1)(k + 1) - h'n' - \max_{\sigma \in X'_k} |\sigma_{k-1} \cap A_{k-1}|,
\end{aligned}
\end{equation*}
where $h' = h(X')$ and $n'$ is the number of vertices in $X'$. Moreover, equality $\mu_k(X, A) = (h' + 1)(k + 1) - h'n' - \max_{\sigma \in X'_k} |\sigma_{k-1} \cap A_{k-1}|$ holds if and only if 
\begin{equation*}
X' \cong \Big(\Delta^{h'}_{(h' - 1)}\Big)^{*(n' - k - 1)} * \Delta^{(h' + 1)(k + 1) - h'n' - 1}
\end{equation*}
and  $|\sigma_{k-1} \cap A_{k-1}| = c$ (a constant) for all $\sigma \in X'_k$.
\end{theorem}

Recall that $G_X$ is the $1$-skeleton of $X$, viewed as a graph. For $k=0$, one can verify that $\mu_0(X)$ coincides with the second smallest eigenvalue $\lambda_2(G_X)$ of the graph Laplacian $L(G_X)$.

Let $G$ be a graph with vertex set $V(G)$. The \textit{flag complex} of $G$, denoted by $X(G)$, is the simplicial complex on $V(G)$ whose simplices are the subsets of $V(G)$ that span complete subgraphs of $G$. Note that $G_{X(G)} = G$.

For a flag complex pair, we establish the following lower bound for the spectral gap $\mu_k(X,A)$.

\begin{theorem}\label{thm::main3}
			Let $X$ be a flag complex, and  let $A$ be a $k$-th discrete boundary of $X$. Let $X'$ be the subcomplex of $X$ defined by \eqref{equ::26}. Then, for any $k\geq 1$, 
			\begin{equation*}
				\mu_k(X,A)\geq(k+1)\lambda_2(G_{X'})-kn'-\max_{\sigma\in X'_k}|\sigma_{k-1}\cap A_{k-1}|,
			\end{equation*}
where $n'$ is the number of vertices in $X'$. Consequently, if 
$$\lambda_2(G_{X'})>\frac{1}{k+1}\left(kn'+\max_{\sigma\in X'_k}|\sigma_{k-1}\cap A_{k-1}|\right),$$ then $H_k(X,A;\mathbb{R})=0$.
	\end{theorem}
	
For a general complex pair $(X,A)$, we have the following inequality relation between $\mu_k(X,A)$ and $\mu_k(X)$.

\begin{theorem}\label{thm::main4}
		Let $(X,A)$ be a complex pair. For  $k\geq 1$, we have
		\begin{equation*}
			\mu_k(X,A)\geq \mu_k(X)-\max_{\sigma\in X_k\setminus A_k}\left(|\sigma_{k-1}\cap A_{k-1}|+|\{\tau\in X_k\setminus A_k:\tau\cap\sigma\in A_{k-1}\}|\right).
		\end{equation*}
Consequently, if 
\begin{equation*}
\mu_k(X)>\max_{\sigma\in X_k\setminus A_k}\left(|\sigma_{k-1}\cap A_{k-1}|+|\{\tau\in X_k\setminus A_k:\tau\cap\sigma\in A_{k-1}\}|\right),
		\end{equation*}
		then $H_k(X,A;\mathbb{R})=0$.
	\end{theorem}
	
	Using Garland's ``local to global'' method (see \cite[Theorem 5.9]{Gar73}), Aharoni, Berger, and Meshulam \cite{ABM05} established an inequality concerning the spectral gaps of flag complexes.
	
	 \begin{theorem}[{\cite[Theorem 1.1]{ABM05}}]\label{thm::ABM}
Let $X$ be a flag complex on $n$ vertices. For $k\geq 1$,
\[
k\mu_k(X)\geq (k+1)\mu_{k-1}(X)-n.
\] 
 \end{theorem} 
	
From Theorems \ref{thm::1.2},  \ref{thm::main4} and  \ref{thm::ABM}, we immediately obtain a sufficient condition for the vanishing of relative homology of flag complex pairs.
	\begin{corollary} 
	Let $X$ be a flag complex on $n$ vertices, and let $A$ be a subcomplex of $X$.  For $k\geq 1$, if
	\begin{equation*}
			\lambda_2(G_X)>\frac{1}{k+1}\left(kn+\max_{\sigma\in X_k\setminus A_k}\left[|\sigma_{k-1}\cap A_{k-1}|+|\{\tau\in X_k\setminus A_k:\tau\cap\sigma\in A_{k-1}\}|\right]\right),
		\end{equation*}
		then $H_k(X,A;\mathbb{R})=0$.
	\end{corollary}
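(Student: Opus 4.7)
The plan is to combine three results. First, I would apply Theorem \ref{thm::4.5} to relate the relative spectral gap to the absolute one: setting
\[
M := \max_{\sigma\in X_k\setminus A_k}\left(|\sigma_{k-1}\cap A_{k-1}|+|\{\tau\in X_k\setminus A_k:\tau\cap\sigma\in A_{k-1}\}|\right),
\]
Theorem \ref{thm::4.5} gives $\mu_k(X,A)\geq \mu_k(X)-M$.

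Second, I would invoke the Aharoni--Berger--Meshulam inequality \cite[Theorem 1.1]{ABM05}, which states that for a flag complex $X$ on $n$ vertices, $\mu_k(X)\geq (k+1)\lambda_2(G_X)-kn$ for all $k\geq 1$. (This is precisely the bound already used internally in the proof of Theorem \ref{thm::4.3}.) Chaining these two inequalities yields
\[
\mu_k(X,A)\geq (k+1)\lambda_2(G_X)-kn-M.
\]

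Third, the hypothesis $\lambda_2(G_X)>\frac{1}{k+1}(kn+M)$ rearranges exactly to $(k+1)\lambda_2(G_X)-kn-M>0$, so $\mu_k(X,A)>0$. By the relative simplicial Hodge theorem (Theorem \ref{thm::1.2}), $H_k(X,A;\mathbb{R})\cong \ker L_k(X,A)$, and since $\mu_k(X,A)$ is the smallest eigenvalue of the positive semidefinite operator $L_k(X,A)$, positivity of $\mu_k(X,A)$ forces $\ker L_k(X,A)=0$, hence $H_k(X,A;\mathbb{R})=0$.

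There is no real obstacle here: the corollary is a straightforward concatenation of Theorem \ref{thm::4.5}, the ABM flag-complex bound, and Theorem \ref{thm::1.2}. The only thing to be slightly careful about is matching the form of \cite[Theorem 1.1]{ABM05} to the inequality $\mu_k(X)\geq (k+1)\lambda_2(G_X)-kn$ used here; since this is the same substitution employed in the proof of Theorem \ref{thm::4.3} (where $X'=X$ and no discrete boundary is involved), no additional verification is needed.
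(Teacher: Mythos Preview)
Your proposal is correct and matches the paper's approach exactly: the paper states that the corollary follows immediately from Theorem~\ref{thm::1.2}, Theorem~\ref{thm::4.5}, and \cite[Theorem~1.1]{ABM05}, which is precisely the chain of inequalities you describe. (One minor aside: the proof of Theorem~\ref{thm::4.3} does not literally invoke \cite{ABM05} but re-derives an analogous bound via additive compound matrices; this does not affect your argument.)
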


By employing discrete boundary and relative homology, we further derive the following estimate for the spectral gap of a pure simplicial complex.

	\begin{theorem}\label{thm::main5}
		Let $X$ be a pure $d$-dimensional simplicial complex. If $H_d(X,B(X);\mathbb{R})\neq 0$, then 
		\begin{equation*}
			\min_{\sigma\in X_d}|\sigma_{d-1}\cap B(X)_{d-1}|\leq \mu_d(X)\leq \max_{\sigma\in X_d}|\sigma_{d-1}\cap B(X)_{d-1}|.
		\end{equation*}
	\end{theorem}

The paper is organized as follows. In Section \ref{section::2}, we review necessary notations from algebraic topology and discuss fundamental properties of relative Laplacians for complex pairs. In Section \ref{section::3}, we present some basic properties concerning relative spanning forests and relative spanning trees, which are then used to prove Theorem \ref{thm::main1}. Finally, in Section \ref{section::4}, we prove Theorems \ref{thm::main2}--\ref{thm::main4} and \ref{thm::main5} in sequence. Moreover, we construct an explicit example of a pure $d$-dimensional simplicial complex satisfying the condition $H_d(X,B(X);\mathbb{R})\neq 0$, as addressed in Theorem \ref{thm::main5}. As a direct corollary of Theorem \ref{thm::main5}, we also obtain estimates for the $d$-th spectral gaps of $d$-paths, (orientable) $d$-cycles, and $d$-stars (see also \cite[Section 4]{HJ13}).

	\section{Preliminaries}\label{section::2}
 In this section, we review some notations from algebraic topology and discuss fundamental properties of relative Laplacians for complex pairs.

 Let $R$ be a commutative ring with identity. A \textit{chain complex} $(C_{\bullet}, \partial_{\bullet})$ is a sequence of $R$-modules $\dots, C_0, C_1, C_2, \dots$ connected by homomorphisms (called \textit{boundary operators}) $\partial_i: C_i \to C_{i-1}$, such that $\partial_i \circ \partial_{i+1} = 0$ for all $i$. The complex may be represented as:
\[
\cdots \longrightarrow C_{i+1} \overset{\partial_{i+1}}{\longrightarrow} C_i \overset{\partial_i}{\longrightarrow} C_{i-1} \longrightarrow \cdots.
\]

Let $X$ be a finite $d$-dimensional simplicial complex. We say that a face $\sigma \in X$ is \textit{oriented} if we choose an ordering of its vertices. Two orderings of the vertices are said to determine \textit{the same orientation} if there exists an even permutation that transforms one ordering into the other, and \textit{opposite orientations} if the permutation is odd.

Let $\sigma \in X$ be a face with vertices ordered by $\prec: v_0 \prec v_1 \prec \cdots \prec v_k$. We denote the \textit{oriented $k$-face} by $[\sigma_\prec] = [v_0, \ldots, v_k]$. For any other ordering $\prec'$ of the vertices of $\sigma$, we define $[\sigma_{\prec'}] = [\sigma_{\prec}]$ if $\prec'$ and $\prec$ determine the same orientation, and $[\sigma_{\prec'}] = -[\sigma_{\prec}]$ if they determine opposite orientations. Moreover, if $\eta = \sigma \setminus \{v_i\} = \{v_0, \ldots, v_{i-1}, v_{i+1}, \ldots, v_k\}$, we define
\[
\operatorname{sgn}([\eta_\prec], [\sigma_\prec]) := (-1)^i \text{ and } \operatorname{sgn}([\eta_\prec], -[\sigma_\prec]) := -(-1)^{i}.
\]

Throughout this paper, unless otherwise stated, we always assume that $\prec$ is a fixed total order on the vertex set $V(X)$ of $X$. This total order induces a canonical orientation on each face $\sigma \in X$, and we simply write $[\sigma] = [\sigma_\prec]$.

For $-1 \leq k \leq d$, let $C_k(X; R)$ be the $k$-th chain group of $X$ with coefficients in $R$, i.e., the free $R$-module with basis $[X_k] := \{[\sigma] \mid \sigma \in X_k\}$. The \textit{boundary map} $\partial_k(X; R):C_k(X; R)\rightarrow C_{k-1}(X; R)$  is the $R$-module homomorphism defined by $R$-linearly extending the following map on basis elements:
\begin{equation}\label{equ::1}
    \partial_k(X; R)[\sigma] = \sum_{\substack{\eta\in X_{k-1}\\\eta\subseteq \sigma}} \operatorname{sgn}([\eta],[\sigma]) \cdot [\eta], \quad \text{for any } \sigma\in X_k.
\end{equation}
The \textit{coboundary map} $\partial_k^*(X; R):C_{k-1}(X; R)\rightarrow C_{k}(X; R)$ is the adjoint operator of $\partial_k(X; R)$ with respect to the natural inner product.  We see that  $\partial_k^*(X; R)$ is given by
\begin{equation*}\label{equ::coboundary_map}
    \partial_k^*(X; R)[\sigma] =  \sum_{\substack{\eta\in X_{k}\\\sigma\subseteq \eta}}\operatorname{sgn}([\sigma],[\eta]) \cdot [\eta], \quad \text{for any } \sigma\in X_{k-1}.
\end{equation*}
For notational convenience, we will omit $(X;R)$ in $\partial_k(X; R)$ and $\partial_k^*(X; R)$ when no ambiguity arises.  One can verify that $\partial_{k}\circ \partial_{k+1}=0$ for all $0 \leq k < d$. Thus, $X$ defines a chain complex:
	\[
			0 \longrightarrow C_d(X;R)\overset{\partial_d}{\longrightarrow} C_{d-1}(X;R) \overset{\partial_{d-1}}{\longrightarrow} \cdots \overset{\partial_0}{\longrightarrow}   C_{-1}(X;R)  \longrightarrow 0.
	\]
The quotient module $\widetilde{H}_k(X;R) := \ker \partial_k / \operatorname{im} \partial_{k+1}$ is called the \textit{$k$-th reduced homology group} of $X$ over $R$. When $R = \mathbb{Z}$, we simplify notation to $C_k(X)$ and $\widetilde{H}_k(X)$. The \textit{$k$-th reduced Betti number} $\widetilde{\beta}_k(X)$ is the rank of the largest free $\mathbb{Z}$-module summand of $\widetilde{H}_k(X)$.

The \textit{$k$-dimensional Laplacian} of $X$ is an $\mathbb{R}$-module homomorphism $L_k(X):C_k(X;\mathbb{R})\rightarrow C_k(X;\mathbb{R})$ defined by
\[
L_k(X) = \partial_{k+1}(X;\mathbb{R})\partial_{k+1}^*(X;\mathbb{R}) + \partial_{k}^*(X;\mathbb{R})\partial_{k}(X;\mathbb{R}).
\]
Since $C_k(X;\mathbb{R})$ is a finite-dimensional real vector space with basis $[X_k]$, we can regard $\partial_k(X;\mathbb{R})$ as a matrix with rows indexed by $X_{k-1}$ and columns by $X_k$, and $\partial_{k}^*(X;\mathbb{R})$ as the transpose matrix $\partial_{k}(X;\mathbb{R})^\top$. Thus, $L_k(X)$ can be represented by a positive semidefinite matrix  with rows and columns indexed by $X_k$.

For $\sigma,\tau \in X_k$ satisfying $|\sigma \cap \tau| = k$, we have $(\sigma \setminus \tau) \cup (\tau \setminus \sigma) = \{u,v\}$ for some $u \prec v \in V$. Let $\epsilon(\sigma,\tau)$ denote the size of the set $\{w \in \sigma \cap \tau : u \prec w \prec v\}$. Then the matrix representation of $L_k(X)$ is given by:

\begin{lemma}[{\cite{DR02,Gol12}}]\label{lem::2.1}
		Let $k\geq 0$. Then, for any $\sigma,\tau\in X_k$,
		\begin{equation*}
			L_k(X)(\sigma,\tau)=\begin{cases} 
			\deg_X(\sigma)+k+1 & \mbox{if $\sigma=\tau$}, \\
				(-1)^{\epsilon(\sigma,\tau)} &\mbox{if $\sigma\cup\tau\notin X_{k+1},~\sigma\cap\tau\in X_{k-1}$}, \\
				0 & \mbox{otherwise}.
			\end{cases}   
		\end{equation*}
	\end{lemma}

Let $A$ be a subcomplex of $X$. For the complex pair $(X,A)$, we denote by $C_k(X,A;R)$ the quotient module $C_k(X;R)/C_k(A;R)$ for all $-1 \leq k \leq d$. By definition, we observe that $C_{-1}(X,A;R)= 0$. Since the boundary map $\partial_k(A;R): C_k(A;R) \to C_{k-1}(A;R)$ is the restriction of the boundary map $\partial_k(X;R)$ on $C_k(A;R)$, the boundary map $\partial_k(X;R): C_k(X;R) \to C_{k-1}(X;R)$ naturally induces a quotient map $\partial_k(X,A;R): C_k(X,A;R) \to C_{k-1}(X,A;R)$, called the \textit{relative boundary map}. The \textit{relative coboundary map} $\partial_k^*(X,A; R): C_{k-1}(X,A; R) \to C_{k}(X,A; R)$ is the adjoint operator of $\partial_k(X,A; R)$ with respect to the natural inner product. For convenience, we will omit $(X,A;R)$ in these notations whenever no ambiguity arises.  

The relative boundary maps satisfy $\partial_k \circ \partial_{k+1} = 0$ for all $k$, and thus the complex pair $(X,A)$ defines a chain complex:
\[
0 \longrightarrow C_{d}(X,A;R) \overset{\partial_{d}}{\longrightarrow} C_{d-1}(X,A;R) \overset{\partial_{d-1}}{\longrightarrow} \cdots \overset{\partial_1}{\longrightarrow} C_{0}(X,A;R) \overset{\partial_0}{\longrightarrow} C_{-1}(X,A;R) = 0.
\]
The quotient module $H_k(X,A;R) = \ker \partial_k / \operatorname{im} \partial_{k+1}$ is called the \textit{$k$-th relative homology group} of $(X,A)$ over $R$. The \textit{$k$-th relative Betti number} $\beta_k(X,A;R)$ is defined as the rank of the largest free $R$-module summand of $H_k(X,A;R)$. When $R = \mathbb{Z}$, we simply write $C_k(X,A)$, $\partial_k(X,A)$, $H_k(X,A)$, and $\beta_k(X,A)$ instead of $C_k(X,A;\mathbb{Z})$, $\partial_k(X,A;\mathbb{Z})$, $H_k(X,A;\mathbb{Z})$, and $\beta_k(X,A;\mathbb{Z})$, respectively.  For $R = \mathbb{R}$, the universal coefficient theorem (see, for example, \cite[Section 3.A]{Hat02}) implies that
\begin{equation}\label{equ::2} 
H_k(X,A;\mathbb{R}) \cong H_k(X,A) \otimes_{\mathbb{Z}} \mathbb{R}, 
\end{equation}
since $\mathbb{R}$ is a flat $\mathbb{Z}$-module. Thus, $\beta_k(X,A;\mathbb{R}) = \beta_k(X,A)$, which allows us to use $\beta_k(X,A)$ in the case $R = \mathbb{R}$ as well.

Note that $C_k(X,A;R)$ can be regarded as a free $R$-submodule of $C_k(X;R)$ with basis $[X_k \setminus A_k] = \{[\sigma] \mid \sigma \in X_k \setminus A_k\}$. Therefore, the relative boundary map $\partial_k(X,A;R)$ can be defined as the $R$-module homomorphism from $C_k(X,A;R)$ (viewed as a free $R$-submodule of $C_k(X;R)$ with basis $[X_k \setminus A_k]$) to $C_{k-1}(X,A;R)$ (viewed as a free $R$-submodule of $C_{k-1}(X;R)$ with basis $[X_{k-1} \setminus A_{k-1}]$) given by
\begin{equation}\label{equ::3}
    \partial_k(X,A;R)([\sigma]) = \sum_{\substack{\eta \in X_{k-1} \setminus A_{k-1} \\ \eta \subseteq \sigma}} \operatorname{sgn}([\eta],[\sigma]) \cdot [\eta], \quad \text{for any } \sigma \in X_k \setminus A_k.
\end{equation}
The relative coboundary map $\partial_k^*(X,A;R): C_{k-1}(X,A;R) \to C_{k}(X,A;R)$ is then given by
\begin{equation}\label{equ::4}
    \partial_k^*(X,A; R)[\sigma] = \sum_{\substack{\eta \in X_{k} \setminus A_k \\ \sigma \subseteq \eta}} \operatorname{sgn}([\sigma],[\eta]) \cdot [\eta], \quad \text{for any } \sigma \in X_{k-1} \setminus A_{k-1}.
\end{equation}

The \textit{$k$-dimensional relative Laplacian} of the complex pair $(X,A)$ is an $\mathbb{R}$-module homomorphism from $C_k(X,A;\mathbb{R})$ to itself, defined by
\[
L_k(X,A) = \partial_{k+1}(X,A;\mathbb{R}) \partial_{k+1}^*(X,A;\mathbb{R}) + \partial_{k}^*(X,A;\mathbb{R}) \partial_{k}(X,A;\mathbb{R}).
\]
The operator $L_k(X,A)$ decomposes into the \textit{up-down part} $L^{\operatorname{ud}}_k(X,A):=\partial_{k+1}(X,A;\mathbb{R})\partial_{k+1}^*(X,A;\mathbb{R})$ and the  \textit{down-up part} $L^{\operatorname{du}}_k(X,A):=\partial_{k}^*(X,A;\mathbb{R})\partial_{k}(X,A;\mathbb{R})$. 

According to the above definitions, we first provide a detailed proof of Theorem \ref{thm::1.2} for the sake of completeness.
	
\renewcommand\proofname{\bf{Proof of Theorem \ref{thm::1.2}}}
	\begin{proof}
		Since $\partial_k\partial_{k+1}=0$ and $\partial^*_{k+1}\partial^*_k=0$, we have
		\begin{equation}\label{equ::5}
			\operatorname{im} L_k^{\operatorname{du}}(X,A)\subseteq\ker  L_k^{\operatorname{ud}}(X,A),
		\end{equation}
		\begin{equation}\label{equ::6}
			\operatorname{im} L_k^{\operatorname{ud}}(X,A)\subseteq\ker  L_k^{\operatorname{du}}(X,A).
		\end{equation}
		Therefore, 
		\begin{equation*}
			\begin{aligned}
				\ker  L_k(X,A) &=\ker  \partial_{k+1}\partial^*_{k+1}\cap\ker \partial^*_k\partial_k\\
				&=\ker \partial^*_{k+1}\cap\ker \partial_k\\
				&=(\operatorname{im}\partial_{k+1})^{\bot}\cap\ker \partial_k\\
				&\cong H_k(X,A;\mathbb{R}),
			\end{aligned}
		\end{equation*}
		and the result follows.
	\end{proof}

Since the $\mathbb{R}$-module $C_k(X,A;\mathbb{R})$ can be viewed as a real vector space with basis $[X_k \setminus A_k]$, we can regard $\partial_k(X,A;\mathbb{R})$ as a matrix with rows indexed by $X_{k-1} \setminus A_{k-1}$ and columns indexed by $X_k \setminus A_k$, and  $\partial_{k}^*(X,A;\mathbb{R})$ as the transpose matrix $\partial_{k}(X,A;\mathbb{R})^\top$. Thus, $L_k(X,A)$ can be represented by a positive semi-definite matrix with rows and columns indexed by $X_k \setminus A_k$. Below, we present the explicit matrix representation of $L_k(X,A)$.

	\begin{lemma}\label{lem::2.2} 
		Let $\sigma,\tau\in X_k\setminus A_k$. The $(\sigma,\tau)$-entries of $L^{\operatorname{ud}}_k(X,A)$ and $L^{\operatorname{du}}_k(X,A)$ are respectively given by 
		\begin{equation*}
			L^{\operatorname{ud}}_k(X,A)(\sigma,\tau)=\begin{cases} 
				\deg_X(\sigma) & \mbox{if $\sigma=\tau$}, \\
				-(-1)^{\epsilon(\sigma,\tau)} &\mbox{if $\sigma\cup\tau\in X_{k+1}$}, \\
				0 & \mbox{otherwise},
			\end{cases}   
		\end{equation*}
		and 
		\begin{equation*}
			L^{\operatorname{du}}_k(X,A)(\sigma,\tau)=\begin{cases} 
				|\sigma_{k-1}\setminus A_{k-1}| & \mbox{if $\sigma=\tau$}, \\
				(-1)^{\epsilon(\sigma,\tau)} &\mbox{if $\sigma\cap\tau\in X_{k-1}\setminus A_{k-1}$}, \\
				0 & \mbox{otherwise}.
			\end{cases}   
		\end{equation*}
Consequently, the $(\sigma,\tau)$-entry of $L_k(X,A)$ is  
		\begin{equation*}
			L_k(X,A)(\sigma,\tau)=\begin{cases} 
				\deg_X(\sigma)+|\sigma_{k-1}\setminus A_{k-1}| & \mbox{if $\sigma=\tau$}, \\
				(-1)^{\epsilon(\sigma,\tau)} &\mbox{if $\sigma\cup\tau\notin X_{k+1},~\sigma\cap\tau\in X_{k-1}\setminus A_{k-1}$}, \\
				-(-1)^{\epsilon(\sigma,\tau)} &\mbox{if $\sigma\cup\tau\in X_{k+1},~\sigma\cap\tau\in A_{k-1}$}, \\
				0 & \mbox{otherwise}.
			\end{cases}
		\end{equation*}
	\end{lemma}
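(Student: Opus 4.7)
The plan is to compute the matrix products defining $L_k^{\operatorname{ud}}(X,A)$ and $L_k^{\operatorname{du}}(X,A)$ entry by entry, using the explicit formulas (3) and (4) for the relative boundary and coboundary maps. For $\sigma,\tau \in X_k\setminus A_k$, I would begin by writing
\[
 L^{\operatorname{ud}}_k(X,A)(\sigma,\tau) \;=\; \sum_{\eta\in X_{k+1}\setminus A_{k+1}} \partial_{k+1}(\sigma,\eta)\,\partial_{k+1}^*(\eta,\tau),
\]
and likewise for the down-up part with $\eta\in X_{k-1}\setminus A_{k-1}$. The first useful observation is that if $\sigma \in X_k\setminus A_k$ and $\eta\in X_{k+1}$ contains $\sigma$, then $\eta\notin A_{k+1}$ automatically, because $A$ being a subcomplex would otherwise force $\sigma\in A_k$. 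Consequently, in the up-down sum the index $\eta$ may be taken over all $(k+1)$-faces of $X$ containing $\sigma\cup\tau$, removing the explicit restriction coming from $A$.

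For the up-down part, the diagonal case $\sigma=\tau$ produces a sum of squared signs, one for each $\eta\supseteq\sigma$, giving exactly $\deg_X(\sigma)$. For $\sigma\neq\tau$, the sum is nonzero only when $\sigma\cup\tau\in X_{k+1}$, in which case $\eta=\sigma\cup\tau$ is the unique index; writing $(\sigma\setminus\tau)\cup(\tau\setminus\sigma)=\{u,v\}$ with $u\prec v$, and letting $i,j$ be the positions of $v$ and $u$ in $\eta$ (so $\sigma=\eta\setminus\{v\}$, $\tau=\eta\setminus\{u\}$), I would use $\epsilon(\sigma,\tau)=i-j-1$ to compute
\[
 \operatorname{sgn}([\sigma],[\eta])\operatorname{sgn}([\tau],[\eta]) \;=\; (-1)^i(-1)^j \;=\; -(-1)^{\epsilon(\sigma,\tau)}.
\]
For the down-up part, the analogous argument gives $|\sigma_{k-1}\setminus A_{k-1}|$ on the diagonal, and off the diagonal the sum is nonzero only when $\sigma\cap\tau\in X_{k-1}\setminus A_{k-1}$, with $\eta=\sigma\cap\tau$ forced and a parallel sign computation yielding $(-1)^{\epsilon(\sigma,\tau)}$.

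The final step is to combine the two parts. On the diagonal the contributions add to $\deg_X(\sigma)+|\sigma_{k-1}\setminus A_{k-1}|$. Off the diagonal only pairs with $|\sigma\cap\tau|=k$ can contribute, and I would split into the four cases coming from whether $\sigma\cup\tau\in X_{k+1}$ and whether $\sigma\cap\tau\in A_{k-1}$: two of these cases give the two nonzero branches in the statement, the case where neither condition is triggered gives $0$, and the remaining case, where $\sigma\cup\tau\in X_{k+1}$ and $\sigma\cap\tau\in X_{k-1}\setminus A_{k-1}$, produces the cancellation $-(-1)^{\epsilon(\sigma,\tau)}+(-1)^{\epsilon(\sigma,\tau)}=0$; this cancellation is the conceptual content beyond the classical Lemma 2.1 and is precisely why the statement lumps this case into ``otherwise''. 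I expect no genuine conceptual obstacle: the argument mirrors the proof of Lemma 2.1, and the only delicate points are (i) the sign bookkeeping in $\epsilon(\sigma,\tau)$ and (ii) the careful verification that the ``subcomplex'' hypothesis on $A$ correctly governs which $\eta$ survive in each summation.
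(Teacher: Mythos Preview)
Your proposal is correct and follows essentially the same route as the paper: both arguments use the observation that any $(k+1)$-face containing $\sigma\in X_k\setminus A_k$ automatically lies outside $A_{k+1}$, then expand the composites $\partial_{k+1}\partial_{k+1}^*$ and $\partial_k^*\partial_k$ over the unique intermediate face $\eta$ and perform the same sign bookkeeping with $\epsilon(\sigma,\tau)$. Your explicit four-case analysis for the combined operator, highlighting the cancellation when $\sigma\cup\tau\in X_{k+1}$ and $\sigma\cap\tau\in X_{k-1}\setminus A_{k-1}$, is slightly more detailed than the paper's concluding sentence but amounts to the same verification.
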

\renewcommand\proofname{\bf{Proof}}
	\begin{proof} Let $\sigma\in X_k\setminus A_k$. Note that if $\eta\in X_{k+1}$ and $\sigma\subseteq \eta$, then   $\eta\notin A_{k+1}$. According to \eqref{equ::3} and  \eqref{equ::4},  we obtain
	\begin{align*}
				\partial_{k+1}\partial^*_{k+1}([\sigma])&=\partial_{k+1}\Bigg(\sum_{\substack{\eta\in X_{k+1}\\ \sigma\subseteq \eta}}\operatorname{sgn}([\sigma],[\eta])\cdot [\eta]\Bigg)\\
				&=\sum_{\substack{\eta\in X_{k+1}\\ \sigma\subseteq \eta}}\operatorname{sgn}([\sigma],[\eta])\sum_{\substack{\tau\in X_{k}\setminus A_k\\ \tau\subseteq \eta}}\operatorname{sgn}([\tau],[\eta])\cdot [\tau]\\
				&=\sum_{\substack{\eta\in X_{k+1}\\ \sigma\subseteq \eta}}\Bigg([\sigma]+\sum_{\substack{\tau\in X_{k}\setminus A_k\\ \sigma\neq \tau\subseteq \eta}}\operatorname{sgn}([\sigma],[\eta])\cdot \operatorname{sgn}([\tau],[\eta])\cdot [\tau]\Bigg)\\
				&=\sum_{\substack{\eta\in X_{k+1}\\ \sigma\subseteq \eta}}\Bigg([\sigma]+\sum_{\substack{\tau\in X_{k}\setminus A_k\\ \tau\cup \sigma=\eta}}(-1)^{\epsilon(\sigma,\tau)+1} [\tau]\Bigg)\\
				&=\operatorname{deg}_X(\sigma)[\sigma]+\sum_{\substack{\tau\in X_{k}\setminus A_k\\ \tau\cup \sigma\in X_{k+1}}}(-1)^{\epsilon(\sigma,\tau)+1} [\tau]
		\end{align*}
and 	
		\begin{align*}
				\partial_k^*\partial_k([\sigma])&=\partial_k^*\Bigg(\sum_{\substack{\eta\in X_{k-1}\setminus A_{k-1}\\\eta\subseteq \sigma}}\operatorname{sgn}([\eta],[\sigma])[\eta]\Bigg)\\
				&=\sum_{\substack{\eta\in X_{k-1}\setminus A_{k-1}\\ \eta\subseteq \sigma}}\operatorname{sgn}([\eta],[\sigma])\sum_{\substack{\tau\in X_{k}\setminus A_{k}\\ \eta \subseteq \tau}}\operatorname{sgn}([\eta],[\tau])\cdot [\tau]\\
				&=\sum_{\substack{\eta\in X_{k-1}\setminus A_{k-1}\\ \eta\subseteq \sigma}}\Bigg([\sigma]+\sum_{\substack{\tau\in X_{k}\setminus A_{k}\\ \eta \subseteq \tau\neq \sigma}}\operatorname{sgn}([\eta],[\sigma])\cdot \operatorname{sgn}([\eta],[\tau])\cdot [\tau]\Bigg)\\
				&=\sum_{\substack{\eta\in X_{k-1}\setminus A_{k-1}\\ \eta\subseteq \sigma}}\Bigg([\sigma]+\sum_{\substack{\tau\in X_{k}\setminus A_{k}\\ \tau\cap \sigma=\eta}}(-1)^{\epsilon(\sigma,\tau)} [\tau]\Bigg)\\
				&=|\sigma_{k-1}\setminus A_{k-1}|\cdot[\sigma]+\sum_{\substack{\tau\in X_k\setminus A_k\\ \tau\cap\sigma\in X_{k-1}\setminus A_{k-1}}}(-1)^{\epsilon(\sigma,\tau)}[\tau].
	        \end{align*}
By the definitions of $L^{\operatorname{ud}}_k(X,A)$, $L^{\operatorname{du}}_k(X,A)$ and $L_k(X,A)$, the result follows.
	\end{proof}
	
	\begin{remark}
	\rm It is worth mentioning that when $k \geq 1$ and $A = \{\emptyset\}$, the matrix $L_k(X, A)$ in Lemma \ref{lem::2.2} coincides with the matrix $L_k(X)$ given in Lemma \ref{lem::2.1}. Furthermore, when $k=0$, the matrix $L_k(X, A)$ becomes the graph Laplacian with Dirichlet boundary conditions for the graph pair $(G_X,A_X)$ (see \cite{Chu96} for details). In particular, when $k = 0$ and $A = \{\emptyset\}$, the matrix $L_k(X, A)$ reduces to the standard graph Laplacian $L(G_X)$. 
	\end{remark}
	
	In view of \eqref{equ::1} and \eqref{equ::3},  we have the following observation, which will be useful in subsequent analysis.
	\begin{lemma}\label{lem::2.3}
		Let $(X,A)$ be a complex pair. Then $\partial_k(X,A;\mathbb{R})$ is the submatrix of $\partial_k(X;\mathbb{R})$ with rows indexed by $X_{k-1}\setminus A_{k-1}$ and columns indexed by $X_k\setminus A_k$.
	\end{lemma}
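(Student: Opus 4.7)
The proof is essentially a direct comparison of the two defining formulas \eqref{equ::1} and \eqref{equ::3}, using the identification (already set up in the paper just above Lemma \ref{lem::2.2}) of $C_k(X,A;\mathbb{R})$ with the free $\mathbb{R}$-submodule of $C_k(X;\mathbb{R})$ spanned by $[X_k\setminus A_k]$. My plan is to fix arbitrary $\sigma\in X_k\setminus A_k$ and $\eta\in X_{k-1}\setminus A_{k-1}$ and check that the $(\eta,\sigma)$-entries of the two matrices agree.

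Concretely, I would first recall that, by \eqref{equ::1}, the matrix of $\partial_k(X;\mathbb{R})$ with respect to the bases $[X_k]$ and $[X_{k-1}]$ has $(\eta,\sigma)$-entry equal to $\operatorname{sgn}([\eta],[\sigma])$ when $\eta\subseteq\sigma$ and $0$ otherwise. On the other hand, by \eqref{equ::3}, the matrix of $\partial_k(X,A;\mathbb{R})$ with respect to the bases $[X_k\setminus A_k]$ and $[X_{k-1}\setminus A_{k-1}]$ has exactly the same $(\eta,\sigma)$-entry, since the only change in \eqref{equ::3} compared to \eqref{equ::1} is that the summation over faces $\eta\subseteq\sigma$ with $\eta\in X_{k-1}$ is restricted to those with $\eta\notin A_{k-1}$; but for $\eta\in X_{k-1}\setminus A_{k-1}$ this restriction has no effect on the coefficient.

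To make the statement water-tight, I would note explicitly that choosing rows indexed by $X_{k-1}\setminus A_{k-1}$ and columns indexed by $X_k\setminus A_k$ in the matrix of $\partial_k(X;\mathbb{R})$ amounts to post-composing with the quotient projection $C_{k-1}(X;\mathbb{R})\twoheadrightarrow C_{k-1}(X,A;\mathbb{R})$ (which kills the basis elements $[\eta]$ with $\eta\in A_{k-1}$) and restricting to the subspace spanned by $[X_k\setminus A_k]$. Since $\partial_k(X,A;\mathbb{R})$ is by definition the map induced on quotients by $\partial_k(X;\mathbb{R})$, this gives exactly the relative boundary map, and the two matrices coincide.

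There is no real obstacle here: the content of the lemma is only that the quotient-module definition of $\partial_k(X,A;\mathbb{R})$ and the explicit formula \eqref{equ::3} agree on the level of matrix entries, which is immediate from the construction. The one point worth articulating carefully is the identification of $C_k(X,A;\mathbb{R})$ with the span of $[X_k\setminus A_k]$ inside $C_k(X;\mathbb{R})$; once this is in place, the equality of matrix entries is a one-line verification and the lemma follows.
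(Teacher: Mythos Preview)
Your approach is correct and matches the paper's: the lemma is stated there as an immediate observation ``in view of \eqref{equ::1} and \eqref{equ::3}'' with no further proof, which is exactly the entrywise comparison you spell out. Your added remark about the quotient/restriction interpretation is fine but not needed beyond what the paper already sets up before Lemma~\ref{lem::2.2}.
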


	For any real symmetric matrix $M$ of order $m+1$, let $\mathbf{s}(M) = [\lambda_0, \ldots, \lambda_m]$ denote the weakly increasing sequence of its eigenvalues. In particular, we denote by $\lambda_{\max}(M)=\lambda_m$ and $\lambda_{\min}(M)=\lambda_0$. Furthermore, we write $\mathbf{s}(M) \stackrel{\circ}{=} \mathbf{s}(N)$ if the eigenvalue multisets of $M$ and $N$ differ only in the multiplicities of their zero eigenvalues. The disjoint union of multisets is denoted by $\sqcup$. 
    
According to \eqref{equ::5} and \eqref{equ::6}, we see that $\lambda$ is a nonzero eigenvalue of $L_k(X,A)$ if and only if it is an eigenvalue of $L^{\operatorname{ud}}_k(X,A)$ or $L^{\operatorname{du}}_k(X,A)$. Therefore,
\begin{equation*}
    \mathbf{s}(L_k(X,A)) \stackrel{\circ}{=} \mathbf{s}(L^{\operatorname{ud}}_k(X,A)) \sqcup \mathbf{s}(L^{\operatorname{du}}_k(X,A)).
\end{equation*}
As a direct consequence of the fact that $\mathbf{s}(MN) \stackrel{\circ}{=} \mathbf{s}(NM)$, for suitable real matrices $M$ and $N$, we obtain the following equality:
\begin{equation*}\label{equ::ud=du}
    \mathbf{s}(L^{\operatorname{ud}}_k(X,A)) \stackrel{\circ}{=} \mathbf{s}(L^{\operatorname{du}}_{k+1}(X,A)).
\end{equation*}

Let $f_k(X,A)$ denote the size of $X_k \setminus A_k$, and define
\begin{equation}\label{equ::7}
    \chi_{k-1}(X,A) = \sum_{j \geq k} (-1)^{j-k} (f_j(X,A) - \beta_j(X,A)), \quad \text{for } k \geq 0.
\end{equation}
Then the multiplicities of zero eigenvalues in $\mathbf{s}(L^{\operatorname{ud}}_k(X,A))$ and $\mathbf{s}(L^{\operatorname{du}}_k(X,A))$ are given as follows.

	\begin{lemma}\label{lem::2.4}
    The multiplicity of zero eigenvalues in $\boldsymbol{\operatorname{s}}(L^{\operatorname{ud}}_k(X,A))$ equals $f_k(X,A)-\chi_k(X,A)$, while in $\boldsymbol{\operatorname{s}}(L^{\operatorname{du}}_k(X,A))$ it equals $f_k(X,A)-\chi_{k-1}(X,A)$.
\end{lemma}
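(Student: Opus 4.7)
The plan is to reduce both multiplicity counts to computing the ranks of the relative boundary maps, and then obtain those ranks from the Euler-characteristic-style identity inside the chain complex of $(X,A)$.

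First I would observe that, for any real matrix $M$, the square matrix $MM^{\top}$ has the same kernel as $M^{\top}$, hence the same rank as $M$. Applying this with $M = \partial_{k+1}^{\ast}(X,A;\mathbb{R})$, the multiplicity of $0$ in $\mathbf{s}(L^{\operatorname{ud}}_k(X,A)) = \mathbf{s}(\partial_{k+1}\partial_{k+1}^{\ast})$ equals $\dim\ker\partial_{k+1}^{\ast} = f_k(X,A) - \operatorname{rank}\partial_{k+1}^{\ast} = f_k(X,A) - \operatorname{rank}\partial_{k+1}$. The same argument on $L^{\operatorname{du}}_k(X,A) = \partial_k^{\ast}\partial_k$ gives that the multiplicity of $0$ there equals $\dim\ker\partial_k = f_k(X,A) - \operatorname{rank}\partial_k$. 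So it remains to prove
\begin{equation*}
\operatorname{rank}\partial_{k+1}(X,A;\mathbb{R}) = \chi_k(X,A), \qquad \operatorname{rank}\partial_k(X,A;\mathbb{R}) = \chi_{k-1}(X,A).
\end{equation*}

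Next I would extract the standard rank identity from the chain complex. By rank--nullity applied to $\partial_k$ and by the definition of $H_k(X,A;\mathbb{R}) = \ker\partial_k/\operatorname{im}\partial_{k+1}$, together with the identification $\beta_k(X,A;\mathbb{R}) = \beta_k(X,A)$ from \eqref{equ::2}, one gets
\begin{equation*}
\operatorname{rank}\partial_k(X,A;\mathbb{R}) + \operatorname{rank}\partial_{k+1}(X,A;\mathbb{R}) = f_k(X,A) - \beta_k(X,A).
\end{equation*}
I would then run a descending induction on $k$ starting from $k = d$: since $C_{d+1}(X,A;\mathbb{R}) = 0$, we have $\operatorname{rank}\partial_{d+1} = 0 = \chi_d(X,A)$. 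Assuming $\operatorname{rank}\partial_{k+2}(X,A;\mathbb{R}) = \chi_{k+1}(X,A)$, the identity above applied at level $k+1$ gives
\begin{equation*}
\operatorname{rank}\partial_{k+1}(X,A;\mathbb{R}) = f_{k+1}(X,A) - \beta_{k+1}(X,A) - \chi_{k+1}(X,A) = \chi_k(X,A),
\end{equation*}
by the recursive definition \eqref{equ::7} of $\chi_k(X,A)$. This settles the formula for the up-down part; the rank identity at level $k$ then yields
\begin{equation*}
\operatorname{rank}\partial_k(X,A;\mathbb{R}) = f_k(X,A) - \beta_k(X,A) - \chi_k(X,A) = \chi_{k-1}(X,A),
\end{equation*}
which settles the down-up part.

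There is no serious obstacle here: the only points that require care are the identification $\dim\ker(MM^{\top}) = \dim\ker M^{\top}$ (so that we work with ranks of boundary maps rather than of Laplacians), keeping the signs in the telescoping sum \eqref{equ::7} straight, and anchoring the induction at the correct boundary condition $\operatorname{rank}\partial_{d+1} = 0$. Once these are in place the proof is essentially a two-line calculation following the reverse induction.
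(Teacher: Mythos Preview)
Your proposal is correct and takes essentially the same approach as the paper. The paper packages rank--nullity and the definition of homology as two short exact sequences that split over $\mathbb{R}$, arriving at the identity $\dim(\operatorname{im}\partial_k)+\dim(\operatorname{im}\partial_{k+1})=f_k(X,A)-\beta_k(X,A)$ and then invoking \eqref{equ::7} to conclude $\dim(\operatorname{im}\partial_k)=\chi_{k-1}(X,A)$; you obtain the same identity via rank--nullity directly and make the telescoping explicit by descending induction from $\operatorname{rank}\partial_{d+1}=0$, but the content is identical.
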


\begin{proof}
    Consider the following two short exact sequences of $\mathbb{R}$-modules:
    \begin{equation*}
    \begin{aligned}
        &0\longrightarrow \ker \partial_k \longrightarrow C_k(X,A;\mathbb{R}) \longrightarrow \operatorname{im}\partial_k \longrightarrow 0, \\
        &0\longrightarrow \operatorname{im}\partial_{k+1} \longrightarrow \ker \partial_k \longrightarrow H_k(X,A;\mathbb{R}) \longrightarrow 0.
    \end{aligned}
    \end{equation*}
    Since every $\mathbb{R}$-module is projective, these short exact sequences split. Thus we obtain
    \begin{equation*}
        f_k(X,A) = \dim(\ker \partial_k) + \dim(\operatorname{im}\partial_k), \quad 
        \dim(\ker \partial_k) = \dim(\operatorname{im}\partial_{k+1}) + \beta_k(X,A).
    \end{equation*}
    Combining this with \eqref{equ::7} yields that
    \begin{equation}\label{equ::8}
        \dim (\operatorname{im}\partial_k) = \chi_{k-1}(X,A).
    \end{equation}
Therefore, the multiplicity of zero eigenvalues in $\boldsymbol{\operatorname{s}}(L^{\operatorname{ud}}_k(X,A))$ is
    \begin{equation*}
        \dim (\ker L^{\operatorname{ud}}_k(X,A)) = \dim (\ker \partial_{k+1}^*) = f_k(X,A) - \dim(\operatorname{im}\partial_{k+1}) = f_k(X,A) - \chi_k(X,A),
    \end{equation*}
    while in $\boldsymbol{\operatorname{s}}(L^{\operatorname{du}}_k(X,A))$ it is
    \begin{equation*}
        \dim (\ker L^{\operatorname{du}}_k(X,A)) = \dim (\ker \partial_{k}) = f_k(X,A) - \dim(\operatorname{im}\partial_{k}) = f_k(X,A) - \chi_{k-1}(X,A).
    \end{equation*}
    
    This completes the proof.
\end{proof}

The well-known Euler-Poincar\'{e} formula (cf. \cite{Poi93,Poi99}) establishes a linear relation between the $f$-vectors and the reduced Betti numbers of simplicial complexes. We conclude this section by proving a relative version of the Euler-Poincar\'{e} formula, which plays a crucial role in the proof of Theorem \ref{thm::main1}. 

\begin{lemma}[Relative Euler-Poincar\'{e} formula]\label{lem::2.5}
    Let $(X,A)$ be a complex pair. Then 
    \begin{equation*}
        \sum_{i\geq 0}(-1)^i f_i(X,A) = \sum_{i\geq 0}(-1)^i \beta_i(X,A).
    \end{equation*}
\end{lemma}

\begin{proof}
    Since $\partial_0(X,A,\mathbb{R})$ is a linear transformation from $C_0(X,A,\mathbb{R})$ to $C_{-1}(X,A,\mathbb{R}) = 0$, we have
    \[
    \dim (\operatorname{im} \partial_0(X,A,\mathbb{R}))= 0.
    \] 
    Combining this with \eqref{equ::8} yields
    \[
    \sum_{i\geq 0}(-1)^i \big(f_i(X,A) - \beta_i(X,A)\big)= \chi_{-1} =\dim (\operatorname{im} \partial_0(X,A,\mathbb{R}))= 0,
    \]
    as desired.
\end{proof}

	\section{Proof of Theorem \ref{thm::main1}}\label{section::3}

In this section, we shall prove Theorem \ref{thm::main1}. To achieve this goal, we need a series of  lemmas.
	
	\begin{lemma}\label{lem::3.1}
    Let $(X,A)$ be a complex pair with $\dim X=d$, and let $\Upsilon$ be a simplicial complex such that $A_{(k)}\subseteq\Upsilon\subseteq X_{(k)}$ and $\Upsilon_{(k-1)}=X_{(k-1)}$. Then any two of the conditions (a), (b), (c) in Definition \ref{def::3.1} imply the third.
\end{lemma}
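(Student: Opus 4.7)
The plan is to apply the relative Euler--Poincar\'e formula (Proposition~\ref{pro::2.3}) to the two pairs $(\Upsilon,A_{(k)})$ and $(X_{(k)},A_{(k)})$, subtract the resulting identities, and use the hypothesis $\Upsilon_{(k-1)}=X_{(k-1)}$ to cancel all low-dimensional terms. This will collapse the two alternating sums into a single scalar identity linking precisely the three quantities appearing in conditions (a), (b), (c) of Definition~\ref{def::3.1}, after which ``any two imply the third'' is trivial arithmetic.

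First I would establish the chain-level agreements needed for the cancellation. Since $\Upsilon\subseteq X_{(k)}$ with $\Upsilon_{(k-1)}=X_{(k-1)}$, and since $(A_{(k)})_i=A_i$ for every $i\le k-1$, the chain complex $C_\bullet(\Upsilon,A_{(k)})$ agrees with $C_\bullet(X,A)$ (chain groups and boundary maps) in dimensions $\le k-1$; similarly $C_\bullet(X_{(k)},A_{(k)})$ agrees with $C_\bullet(X,A)$ in dimensions $\le k$. Consequently $f_i(\Upsilon,A_{(k)})=f_i(X,A)$ for $i\le k-1$ and $\beta_i(\Upsilon,A_{(k)})=\beta_i(X,A)$ for $i\le k-2$; likewise $f_i(X_{(k)},A_{(k)})=f_i(X,A)$ for $i\le k$ and $\beta_i(X_{(k)},A_{(k)})=\beta_i(X,A)$ for $i\le k-1$. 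Moreover both $f_i$ and $\beta_i$ vanish in dimensions $>k$ for each of the two pairs, since $\Upsilon$ and $X_{(k)}$ are at most $k$-dimensional.

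Feeding these identifications into the two Euler--Poincar\'e relations and subtracting, all terms in dimensions $\le k-2$ cancel outright, and after multiplying by $(-1)^k$ only the single identity
\begin{equation*}
f_k(X,A)-f_k(\Upsilon,A_{(k)})=\beta_k(X_{(k)},A_{(k)})-\beta_k(\Upsilon,A_{(k)})+\beta_{k-1}(\Upsilon,A_{(k)})-\beta_{k-1}(X,A)
\end{equation*}
survives. Introducing $u:=\beta_k(\Upsilon,A_{(k)})$, $v:=\beta_{k-1}(\Upsilon,A_{(k)})-\beta_{k-1}(X,A)$, and $w:=f_k(X,A)-\beta_k(X_{(k)},A_{(k)})-f_k(\Upsilon,A_{(k)})$, this identity reads $w=v-u$; and conditions (a), (b), (c) are exactly $u=0$, $v=0$, $w=0$, respectively. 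Hence any two of them force the third. The only delicate point in the whole argument is the chain-level bookkeeping in the preparatory step, namely ensuring that the three pairs $(\Upsilon,A_{(k)})$, $(X_{(k)},A_{(k)})$, and $(X,A)$ really have the claimed agreements (and disagreements) in each dimension; once this is settled, everything else is purely numerical.
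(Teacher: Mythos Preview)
Your proof is correct and follows essentially the same approach as the paper: apply the relative Euler--Poincar\'e formula to the pairs $(\Upsilon,A_{(k)})$ and $(X_{(k)},A_{(k)})$, use $\Upsilon_{(k-1)}=X_{(k-1)}$ to cancel the low-dimensional terms, and obtain a single linear relation among the three quantities in (a), (b), (c). The only cosmetic difference is that you route the low-dimensional identifications through the pair $(X,A)$ whereas the paper compares $(\Upsilon,A_{(k)})$ directly with $(X_{(k)},A_{(k)})$ and then invokes $f_k(X_{(k)},A_{(k)})=f_k(X,A)$ and $\beta_{k-1}(X_{(k)},A_{(k)})=\beta_{k-1}(X,A)$ at the end.
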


\begin{proof}
    By Lemma \ref{lem::2.5}, we have
    \begin{equation}\label{equ::9}
        \sum_{i=0}^k(-1)^if_i(\Upsilon,A_{(k)})=\sum_{i=0}^k(-1)^i\beta_i(\Upsilon,A_{(k)})
    \end{equation}
    and 
    \begin{equation}\label{equ::10}
        \sum_{i=0}^k(-1)^if_i(X_{(k)},A_{(k)})=\sum_{i=0}^k(-1)^i\beta_i(X_{(k)},A_{(k)}).
    \end{equation}
Since $\Upsilon_{(k-1)}=X_{(k-1)}$, it follows that $f_i(\Upsilon,A_{(k)})=f_i(X_{(k)},A_{(k)})$ for $i\leq k-1$ and $\beta_i(\Upsilon,A_{(k)})=\beta_i(X_{(k)},A_{(k)})$ for $i\leq k-2$. Combining this with \eqref{equ::9} and \eqref{equ::10}, we obtain
    \begin{equation*}
        f_k(\Upsilon,A_{(k)})-f_k(X_{(k)},A_{(k)})=(\beta_k(\Upsilon,A_{(k)})-\beta_{k-1}(\Upsilon,A_{(k)}))-(\beta_k(X_{(k)},A_{(k)})-\beta_{k-1}(X_{(k)},A_{(k)})),
    \end{equation*}
    which can be rewritten as
    \begin{equation*}
        f_k(\Upsilon,A_{(k)})-f_k(X_{(k)},A_{(k)})+\beta_k(X_{(k)},A_{(k)})=\beta_k(\Upsilon,A_{(k)})-(\beta_{k-1}(\Upsilon,A_{(k)})-\beta_{k-1}(X_{(k)},A_{(k)})).
    \end{equation*} 
    Note that $f_{k}(X_{(k)},A_{(k)})=f_{k}(X,A)$ and $\beta_{k-1}(X_{(k)},A_{(k)})=\beta_{k-1}(X,A)$. By Definition \ref{def::3.1}, the result follows immediately.
\end{proof}

	\begin{lemma}\label{lem::3.2}
		Let $(X,A)$ be a complex pair with $\dim X=d$. Then, for any $k\in\{0,\ldots,d\}$, the following statements hold:
\begin{enumerate}[(i)]
\item $\mathcal{F}_k(X,A)\neq \emptyset$;
\item $\mathcal{T}_k(X,A)\subseteq \mathcal{F}_k(X,A)$;
\item $\mathcal{T}_k(X,A)\neq \emptyset$ if and only if $\beta_{k-1}(X,A)=0$.
\end{enumerate}		
	\end{lemma}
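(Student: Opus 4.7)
The plan is to prove the three claims in order, deriving (ii) and (iii) almost immediately from (i) together with Lemma \ref{lem::3.1}.

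For (i), I would construct a spanning forest by a linear-algebra argument. Observe that $C_k(X,A;\mathbb{R})$ and $\partial_k(X,A;\mathbb{R})$ depend only on the $k$-skeletons, so $\partial_k(X,A;\mathbb{R})=\partial_k(X_{(k)},A_{(k)};\mathbb{R})$ as linear maps. Because $X_{(k)}$ has no $(k+1)$-faces, the group $H_k(X_{(k)},A_{(k)};\mathbb{R})$ coincides with $\ker\partial_k(X,A;\mathbb{R})$, giving $\dim\operatorname{im}\partial_k(X,A;\mathbb{R})=f_k(X,A)-\beta_k(X_{(k)},A_{(k)})$. I would then choose a set $S\subseteq X_k\setminus A_k$ of this size such that $\{\partial_k[\sigma]:\sigma\in S\}$ is a basis of $\operatorname{im}\partial_k(X,A;\mathbb{R})$, and set $\Upsilon=X_{(k-1)}\cup A_{(k)}\cup S$. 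All proper faces of simplices in $S$ already lie in $X_{(k-1)}$, so $\Upsilon$ is a simplicial complex with $A_{(k)}\subseteq\Upsilon\subseteq X_{(k)}$ and $\Upsilon_{(k-1)}=X_{(k-1)}$. By the choice of $S$ the map $\partial_k(\Upsilon,A_{(k)};\mathbb{R})$ is injective, which yields condition (a); the cardinality $|S|$ matches exactly that demanded by (c); and Lemma \ref{lem::3.1} then forces condition (b). Hence $\Upsilon\in\mathcal{F}_k(X,A)$.

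For (ii), suppose $\Upsilon\in\mathcal{T}_k(X,A)$. Then conditions (a) and (c) of Definition \ref{def::3.1} hold, so Lemma \ref{lem::3.1} supplies condition (b) as well. Thus $\Upsilon$ satisfies (a), (b) and (c), placing it in $\mathcal{F}_k(X,A)$. For (iii), the forward direction follows from the same observation: a tree satisfies both (b) and (b$^*$), hence $\beta_{k-1}(X,A)=\beta_{k-1}(\Upsilon,A_{(k)})=0$. Conversely, if $\beta_{k-1}(X,A)=0$ then conditions (b) and (b$^*$) become identical, so any forest constructed in (i) is automatically a tree, and such a forest exists by part (i).

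The only technical point that requires care is the identification of $\dim\ker\partial_k(X,A;\mathbb{R})$ with $\beta_k(X_{(k)},A_{(k)})$ used in (i); this relies on the absence of $(k+1)$-faces in the $k$-skeleton and pins down the correct cardinality of $S$. Once this skeleton reduction is in place, everything else reduces to straightforward bookkeeping and invocations of Lemma \ref{lem::3.1}, so I do not anticipate serious difficulties beyond setting up the construction cleanly.
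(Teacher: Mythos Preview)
Your proposal is correct and in places cleaner than the paper's argument, though the overall strategy differs in two spots worth noting.

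For (i), the paper builds a forest top-down: it starts with $\Gamma=X_{(k)}$ and, while $\beta_k(\Gamma,A_{(k)})>0$, removes a $k$-face appearing with nonzero coefficient in some relative $k$-cycle, checking via the relative Euler--Poincar\'e formula that this decreases $\beta_k$ by one without disturbing $\beta_{k-1}$. Your approach is bottom-up: you select in one stroke a subset $S\subseteq X_k\setminus A_k$ indexing a column basis of $\partial_k(X,A;\mathbb{R})$ and verify (a) and (c) directly, then invoke Lemma~\ref{lem::3.1} for (b). Both are standard matroid-theoretic ways of producing a basis of a linear map; yours avoids the inductive bookkeeping and makes the role of Lemma~\ref{lem::3.1} more transparent.

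For (ii), the paper argues directly that $\operatorname{im}\partial_k(\Upsilon,A_{(k)})\subseteq\operatorname{im}\partial_k(X,A)$ and $\ker\partial_{k-1}(\Upsilon,A_{(k)})=\ker\partial_{k-1}(X,A)$, whence $\beta_{k-1}(X,A)\le\beta_{k-1}(\Upsilon,A_{(k)})=0$. Your route---trees satisfy (a) and (c), so Lemma~\ref{lem::3.1} forces (b)---is shorter and reuses the same lemma. For (iii) the two arguments coincide. The only point you flag as delicate, the identification $\dim\ker\partial_k(X,A;\mathbb{R})=\beta_k(X_{(k)},A_{(k)})$, is indeed immediate from $(X_{(k)})_{k+1}=\emptyset$, so no hidden difficulty lurks there.
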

	
	\begin{proof}
	
	We construct an element $\Gamma \in \mathcal{F}_k(X,A)$ as follows. First, set $\Gamma = X_{(k)}$. If $\beta_k(\Gamma,A_{(k)}) \neq 0$, then there exists a nonzero element $\gamma \in C_k(\Gamma,A_{(k)})$ in $\ker \partial_k(\Gamma,A_{(k)})$. Let $\sigma$ be a $k$-dimensional face of $\Gamma$ such that the coefficient of $[\sigma]$ in $\gamma$ is nonzero, and define $\Gamma' = \Gamma \setminus \{\sigma\}$. We then have $\beta_k(\Gamma',A_{(k)}) = \beta_k(\Gamma,A_{(k)}) - 1$ and $\beta_i(\Gamma',A_{(k)}) = \beta_i(\Gamma,A_{(k)})$ for all $i \leq k-2$. By Lemma \ref{lem::2.5}, we deduce that $\beta_{k-1}(\Gamma',A_{(k)}) = \beta_{k-1}(\Gamma,A_{(k)})$. Taking $\Gamma = \Gamma'$ and iterating this process successively, we eventually obtain a subcomplex $\Gamma$ satisfying $\beta_k(\Gamma,A_{(k)}) = 0$ and $\beta_{k-1}(\Gamma,A_{(k)}) = \beta_{k-1}(X,A)$. By Definition \ref{def::3.1} and Lemma \ref{lem::3.1}, we conclude that $\Gamma \in \mathcal{F}_k(X,A)$. Therefore, $\mathcal{F}_k(X,A)\neq \emptyset$, and this proves (i).
	
For (ii), let $\Upsilon \in \mathcal{T}_k(X,A)$ (if any exists). Since $A_{(k)} \subseteq \Upsilon \subseteq X_{(k)}$ and $\Upsilon_{(k-1)} = X_{(k-1)}$, it follows that $C_k(\Upsilon, A_{(k)}) \subseteq C_k(X, A)$, $C_{k-1}(\Upsilon, A_{(k)}) = C_{k-1}(X, A)$, and $C_{k-2}(\Upsilon, A_{(k)}) = C_{k-2}(X, A)$. Thus,  $\operatorname{im} \partial_{k}(\Upsilon,A_{(k)}) \subseteq \operatorname{im} \partial_{k}(X,A)$ and $\ker \partial_{k-1}(\Upsilon,A_{(k)}) = \ker \partial_{k-1}(X,A)$. This implies 
$\beta_{k-1}(X, A) \leq \beta_{k-1}(\Upsilon, A_{(k)}) = 0$,
and therefore $\beta_{k-1}(X, A) = \beta_{k-1}(\Upsilon, A_{(k)}) = 0$. Hence, by Definition \ref{def::3.1}, we conclude that $\Upsilon \in \mathcal{F}_k(X,A)$.

Finally, we consider (iii). The ``only if'' part follows immediately from the above proof. For the ``if'' part, assume that $\beta_{k-1}(X, A) = 0$. Take any $\Upsilon \in \mathcal{F}_k(X, A)$. Then we have $\beta_{k-1}(\Upsilon, A_{(k)}) = \beta_{k-1}(X, A) = 0$,
and consequently $\Upsilon \in \mathcal{T}_k(X,A)$ by Definition \ref{def::3.1}. Therefore, we conclude that $\mathcal{T}_k(X, A) = \mathcal{F}_k(X, A) \neq \emptyset$.
	\end{proof}
	
	In the remainder of this section, we always assume that $(X,A)$ is a complex pair with $\dim X=d$ satisfying $\beta_{k-1}(X,A)=0$ for some fixed $k\in\{0,\ldots,d\}$. In this case, Lemma \ref{lem::3.2} guarantees that $\mathcal{T}_k(X,A)\neq \emptyset$.

	\begin{lemma}\label{lem::3.3}
	We have
	\begin{equation*}
				f_k(X,A)=\beta_k(X_{(k)},A_{(k)})+\beta_{k-1}(X_{(k-1)},A_{(k-1)}).
		\end{equation*}
	\end{lemma}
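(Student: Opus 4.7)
The plan is to directly analyze the relative chain complex of $(X,A)$ in degree $k$, combining rank-nullity with the standing hypothesis $\beta_{k-1}(X,A)=0$. The key preliminary observation is that the relative chain groups and relative boundary maps in degrees $\leq k$ depend only on the $k$-skeleton of $X$ and the $k$-skeleton of $A$; hence $C_i(X,A;\mathbb{R})=C_i(X_{(k)},A_{(k)};\mathbb{R})$ for $i\leq k$, with identical boundary maps between them. In particular $f_k(X,A)=f_k(X_{(k)},A_{(k)})$, and moreover $\beta_{k-1}(X_{(k)},A_{(k)})=\beta_{k-1}(X,A)=0$.

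First, I would apply rank-nullity to the relative boundary map $\partial_k\colon C_k(X,A;\mathbb{R})\to C_{k-1}(X,A;\mathbb{R})$ to obtain
\[
f_k(X,A)=\dim\ker\partial_k+\dim\operatorname{im}\partial_k,
\]
and then identify each of the two summands with one of the Betti numbers on the right-hand side of the target identity.

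For $\dim\ker\partial_k$, I would use that $X_{(k)}$ has no $(k+1)$-faces, so $\partial_{k+1}(X_{(k)},A_{(k)};\mathbb{R})=0$, giving $\beta_k(X_{(k)},A_{(k)})=\dim\ker\partial_k(X_{(k)},A_{(k)};\mathbb{R})=\dim\ker\partial_k$. For $\dim\operatorname{im}\partial_k$, the hypothesis $\beta_{k-1}(X_{(k)},A_{(k)})=0$ yields $\ker\partial_{k-1}=\operatorname{im}\partial_k$, while $X_{(k-1)}$ having no $k$-faces yields $C_k(X_{(k-1)},A_{(k-1)};\mathbb{R})=0$ and hence
\[
\beta_{k-1}(X_{(k-1)},A_{(k-1)})=\dim\ker\partial_{k-1}(X_{(k-1)},A_{(k-1)};\mathbb{R})=\dim\ker\partial_{k-1}=\dim\operatorname{im}\partial_k.
\]
Substituting the two identifications into the rank-nullity equation gives the claimed formula.

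I do not anticipate a substantive obstacle; the argument is essentially bookkeeping on the relative chain complex. The only delicate point is to keep the identifications between the chain complexes of $X$, $X_{(k)}$, and $X_{(k-1)}$ (and their $A$-counterparts) explicit, so that all boundary maps, kernels, and images in play refer to the same underlying linear operators. As an alternative, one could derive the same identity by applying the relative Euler--Poincar\'{e} formula (Proposition \ref{pro::2.3}) to both $(X_{(k)},A_{(k)})$ and $(X_{(k-1)},A_{(k-1)})$, subtracting the two alternating sums, and then invoking $f_i(X_{(k)},A_{(k)})=f_i(X_{(k-1)},A_{(k-1)})$ for $i\leq k-1$, $\beta_i(X_{(k)},A_{(k)})=\beta_i(X_{(k-1)},A_{(k-1)})$ for $i\leq k-2$, and $\beta_{k-1}(X_{(k)},A_{(k)})=0$.
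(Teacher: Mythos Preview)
Your proposal is correct. Your primary route---rank--nullity on $\partial_k$ together with the standing hypothesis $\beta_{k-1}(X,A)=0$---is a more direct argument than the paper's, which proceeds exactly along the lines of the alternative you sketch at the end: apply the relative Euler--Poincar\'e formula (Proposition~\ref{pro::2.3}) to $(X_{(k)},A_{(k)})$ and to $(X_{(k-1)},A_{(k-1)})$, subtract, and use $\beta_i(X_{(j)},A_{(j)})=\beta_i(X,A)$ for $i\le j-1$ together with $\beta_{k-1}(X,A)=0$. Your direct argument avoids the alternating-sum cancellation and makes the two identifications $\dim\ker\partial_k=\beta_k(X_{(k)},A_{(k)})$ and $\dim\operatorname{im}\partial_k=\beta_{k-1}(X_{(k-1)},A_{(k-1)})$ completely explicit; the paper's route has the minor advantage of reusing a named proposition already established. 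Either way the content is the same, and the one nontrivial input---the hypothesis $\beta_{k-1}(X,A)=0$---enters at the same point.
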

	\begin{proof}
		By Lemma \ref{lem::2.5}, we have
		\begin{equation}\label{equ::11}
			\sum_{i=0}^k(-1)^if_i(X,A)=\sum_{i=0}^k(-1)^i\beta_i(X_{(k)},A_{(k)})
		\end{equation}
		and 
		\begin{equation}\label{equ::12}
			\sum_{i=0}^{k-1}(-1)^if_i(X,A)=\sum_{i=0}^{k-1}(-1)^i\beta_i(X_{(k-1)},A_{(k-1)}).
		\end{equation}
		Note that $\beta_i(X_{(j)},A_{(j)})=\beta_i(X,A)$ whenever $i\leq j-1$. Then it follows from \eqref{equ::11} and \eqref{equ::12} that
		\begin{equation*}
			\begin{aligned}
				f_k(X,A)&=\beta_k(X_{(k)},A_{(k)})-\beta_{k-1}(X,A)+\beta_{k-1}(X_{(k-1)},A_{(k-1)})\\
				&=\beta_k(X_{(k)},A_{(k)})+\beta_{k-1}(X_{(k-1)},A_{(k-1)}),
			\end{aligned}
		\end{equation*}
as desired.
	\end{proof}

For any subsets $B\subseteq X_k$ and $C\subseteq X_{k-1}$ with $|B|=|C|$, we denote by $\partial_k[B,C]$ the square submatrix of $\partial_k(X;\mathbb{R})$ with rows indexed by $C$ and columns indexed by $B$.	

Suppose that $\Upsilon \in \mathcal{T}_k(X,A)$ and $\Gamma \in \mathcal{F}_{k-1}(X,A)$. According to Definition \ref{def::3.1}, it is easy to see that $\Upsilon$ and $\Gamma$ can be respectively expressed as 
\[
\Upsilon = B \cup A_k \cup X_{(k-1)} \quad \text{and} \quad \Gamma = (X_{k-1} \setminus C) \cup X_{(k-2)},
\]
where $B$ and $C$ are chosen as follows:
\begin{enumerate}[(I)]
    \item $B$ is a subset of $X_k \setminus A_k$ with cardinality $|B| = f_k(X,A) - \beta_k(X_{(k)}, A_{(k)})$;
    \item  $C$ is a subset of $X_{k-1} \setminus A_{k-1}$ with cardinality $|C| = \beta_{k-1}(X_{(k-1)}, A_{(k-1)})$. 
\end{enumerate}
Moreover, by Lemma \ref{lem::3.3},  
\[|B|=f_k(X,A) - \beta_k(X_{(k)}, A_{(k)})=\beta_{k-1}(X_{(k-1)}, A_{(k-1)})=|C|.\]

Now suppose that $B \subseteq X_k \setminus A_k$ and $C \subseteq X_{k-1} \setminus A_{k-1}$ are two arbitrary subsets satisfying conditions (I) and (II). As above, we define  
\begin{equation*}  
    X_{B} = B \cup A_k \cup X_{(k-1)} \quad \text{and} \quad X_{C} = (X_{k-1} \setminus C) \cup X_{(k-2)}.  
\end{equation*}  
Observe that both $X_B$ and $X_{C}$ are subcomplexes of $X$, with dimensions $\dim X_{B} = k$ and $\dim X_{C} = k-1$, respectively. These subcomplexes satisfy the following inclusion relations:  
\[  
A_{(k-1)} \subseteq X_{C} \subseteq X_{(k-1)}, \quad X_{C} \subseteq X_{B}, \quad \text{and} \quad A_{(k)} \subseteq X_{B} \subseteq X_{(k)}.  
\]
By Lemma \ref{lem::2.3}, it is easy to see that the columns of $\partial_k(X_{B}, X_{C};\mathbb{R})$ indexed by $A_k$ are all zeros, and $\partial_k[B, C]$ is exactly the matrix obtained from $\partial_k(X_{B}, X_{C};\mathbb{R})$ by removing these zero columns. That is,
\begin{equation}\label{equ::13}
\partial_k(X_{B},X_{C};\mathbb{R})=
\left[\begin{matrix}
	\partial_k[B,C]& 0_{A_k}
\end{matrix}\right].
\end{equation}
 
	\begin{lemma}\label{lem::3.4}
		The matrix $\partial_k[B,C]$ is nonsingular if and only if $X_{B}\in\mathcal{T}_k(X,A)$ and $X_{C}\in\mathcal{F}_{k-1}(X,A)$.
	\end{lemma}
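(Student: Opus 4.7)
The plan is to factor $\partial_k[B,C]$ as the composition $\pi_C\circ \partial_k(X_B,A_{(k)};\mathbb{R})$, where $\pi_C$ denotes the coordinate projection of $C_{k-1}(X,A;\mathbb{R})$ onto $\operatorname{span}[C]$. Under the natural identification of $C_{k-1}(X_C,A_{(k-1)};\mathbb{R})$ with the subspace of $C_{k-1}(X,A;\mathbb{R})$ spanned by $X_{k-1}\setminus A_{k-1}\setminus C$, we have $\ker\pi_C=C_{k-1}(X_C,A_{(k-1)};\mathbb{R})$. Lemma \ref{lem::3.3} makes $\partial_k[B,C]$ a square matrix of size $|B|=|C|$, and a short linear-algebra argument then shows that $\partial_k[B,C]$ is nonsingular if and only if both (i) $\partial_k(X_B,A_{(k)};\mathbb{R})$ is injective, and (ii) $\operatorname{im}\partial_k(X_B,A_{(k)};\mathbb{R})\cap C_{k-1}(X_C,A_{(k-1)};\mathbb{R})=0$.

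Next I would translate (i) into membership in $\mathcal{T}_k(X,A)$. Since $\dim X_B=k$, condition (i) is equivalent to $\beta_k(X_B,A_{(k)})=0$; combined with $f_k(X_B,A_{(k)})=|B|=f_k(X,A)-\beta_k(X_{(k)},A_{(k)})$ (condition (c) of Definition \ref{def::3.1}, holding by construction of $B$) and the running hypothesis $\beta_{k-1}(X,A)=0$, Lemma \ref{lem::3.1} gives exactly $X_B\in\mathcal{T}_k(X,A)$, and conversely.

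For the equivalence of (ii) with $X_C\in\mathcal{F}_{k-1}(X,A)$, the key observation is that $\partial_{k-1}(X,A;\mathbb{R})$ restricted to $C_{k-1}(X_C,A_{(k-1)};\mathbb{R})$ coincides with $\partial_{k-1}(X_C,A_{(k-1)};\mathbb{R})$, since $(X_C)_{(k-2)}=X_{(k-2)}$. In the ``if'' direction, any $\eta$ in the intersection of (ii) is a $\partial_k$-image, so $\partial_{k-1}(X_C,A_{(k-1)};\mathbb{R})\eta=0$ via $\partial_{k-1}\partial_k=0$; since $X_C\in\mathcal{F}_{k-1}(X,A)$ forces $\beta_{k-1}(X_C,A_{(k-1)})=0$ and $\dim X_C=k-1$ makes this equivalent to injectivity of $\partial_{k-1}(X_C,A_{(k-1)};\mathbb{R})$, we conclude $\eta=0$. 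For the ``only if'' direction, any $\eta\in\ker\partial_{k-1}(X_C,A_{(k-1)};\mathbb{R})$ lies in $\ker\partial_{k-1}(X,A;\mathbb{R})=\operatorname{im}\partial_k(X,A;\mathbb{R})$ by the hypothesis $\beta_{k-1}(X,A)=0$, and then the identity $\operatorname{im}\partial_k(X_B,A_{(k)};\mathbb{R})=\operatorname{im}\partial_k(X,A;\mathbb{R})$ --- proved by a dimension count, since both sides are $|B|=f_k(X,A)-\beta_k(X_{(k)},A_{(k)})$-dimensional (the right-hand side by rank-nullity applied to $\partial_k(X,A;\mathbb{R})=\partial_k(X_{(k)},A_{(k)};\mathbb{R})$, which has no $(k{+}1)$-chains in its domain) --- places $\eta$ in the intersection, so (ii) forces $\eta=0$; invoking Lemma \ref{lem::3.1} once more converts this into $X_C\in\mathcal{F}_{k-1}(X,A)$.

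The main obstacle is the identity $\operatorname{im}\partial_k(X_B,A_{(k)};\mathbb{R})=\operatorname{im}\partial_k(X,A;\mathbb{R})$ used in the ``only if'' direction: it is what permits lifting an arbitrary $k$-boundary in $(X,A)$ to a boundary of a chain supported on $B$, and it is precisely where the hypothesis $\beta_{k-1}(X,A)=0$ and the careful bookkeeping of relative Betti numbers via Lemma \ref{lem::3.1} come together.
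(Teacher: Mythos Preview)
Your approach is correct and substantially more elementary than the paper's. One small point of care: you phrase the argument as establishing two independent equivalences, (i) $\Leftrightarrow X_B\in\mathcal{T}_k(X,A)$ and (ii) $\Leftrightarrow X_C\in\mathcal{F}_{k-1}(X,A)$, but your proof of the implication (ii) $\Rightarrow X_C\in\mathcal{F}_{k-1}(X,A)$ tacitly uses (i) when you assert that $\operatorname{im}\partial_k(X_B,A_{(k)};\mathbb{R})$ is $|B|$-dimensional. The standalone equivalence is in fact false (if $\partial_k(X_B,A_{(k)};\mathbb{R})$ has nontrivial kernel, its image may miss $C_{k-1}(X_C,A_{(k-1)};\mathbb{R})$ entirely even when $X_C$ is not a relative forest). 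This does not damage the overall proof of the lemma, since in the forward direction nonsingularity hands you (i) and (ii) simultaneously and your use of (i) is then legitimate; just reorganize the exposition so that you prove ``nonsingular $\Rightarrow X_B\in\mathcal{T}_k(X,A)$ and $X_C\in\mathcal{F}_{k-1}(X,A)$'' directly rather than via two separate biconditionals.

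The paper takes a homological route instead: it identifies nonsingularity of $\partial_k[B,C]$ with the vanishing of a modified top homology group $H_k^{\square}(X_B,X_C)$, then builds a short exact sequence of chain complexes relating the pairs $(X_C,A_{(k-1)})$, $(X_B,A_{(k)})$, and $(X_B,X_C)$, and reads off both directions from the resulting long exact sequence via the zig-zag lemma. Your linear-algebraic factorization through $\pi_C$ bypasses all of this machinery, which is a genuine simplification for Lemma~\ref{lem::3.4} in isolation. The payoff of the paper's heavier setup is that the same long exact sequence is immediately reused in Lemma~\ref{lem::3.5} to compute $|\det\partial_k[B,C]|$ in terms of orders of torsion groups; the homological work thus amortizes over both lemmas, whereas your approach would require building that exact sequence (or an equivalent device) from scratch when you get there.
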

	\begin{proof}

	Consider the chain complex
\begin{equation}\label{equ::14}
    \begin{aligned}
        0 \longrightarrow {C_{k}(X_{B},X_{C})} \overset{\partial_{k}}{\longrightarrow}  {C_{k-1}(X_{B},X_{C})} \overset{\partial_{k-1}}{\longrightarrow}  {C_{k-2}(X_{B},X_{C})} \longrightarrow \cdots.
    \end{aligned}
\end{equation}
Observing that the free $\mathbb{Z}$-module $\mathcal{A}$ with basis $[A_k]$ can be identified with a submodule of $C_{k}(X_{B},X_{C})$ contained in $\ker \partial_{k}$, we define the quotient module
\[
C_{k}^{\square}(X_{B},X_{C}) := C_{k}(X_{B},X_{C})/\mathcal{A},
\]
and denote by $\partial^\square_{k}$ the induced quotient map from $C_{k}^{\square}(X_{B},X_{C})$ to $C_{k-1}(X_{B},X_{C})$. This yields the following chain complex induced by \eqref{equ::14}:
\begin{equation}\label{equ::15}
    \begin{aligned}
        0 \longrightarrow  C_{k}^{\square}(X_{B},X_{C}) \overset{\partial^\square_{k}}{\longrightarrow} C_{k-1}(X_{B},X_{C}) \overset{\partial_{k-1}}{\longrightarrow}  C_{k-2}(X_{B},X_{C}) \longrightarrow \cdots.
    \end{aligned}
\end{equation}
Let $H^{\square}_k(X_{B},X_{C})$ denote the group $\ker \partial^{\square}_{k}$. We have
\begin{equation}\label{equ::16}
H^{\square}_k(X_{B},X_{C})\oplus \mathbb{Z}^{|A_k|}\cong H_k(X_{B},X_{C}).
\end{equation}
Note that  $H_k(X_{B},X_{C};\mathbb{R})\cong \ker \partial_k(X_{B},X_{C};\mathbb{R})$ and $H_k(X_{B},X_{C})\cong \ker \partial_k(X_{B},X_{C};\mathbb{Z})$, and both of them are torsion-free. According to \eqref{equ::13}, the matrix $\partial_k[B,C]$ is nonsingular if and only if $H_k(X_{B},X_{C};\mathbb{R})\cong \mathbb{R}^{|A_k|}$. Recalling that $H_k(X_{B},X_{C};\mathbb{R})$ $\cong H_k(X_{B},X_{C})\otimes_{\mathbb{Z}}\mathbb{R}$ by \eqref{equ::2}, we conclude that $\partial_k[B,C]$ is nonsingular if and only if $H_k(X_{B},X_{C})\cong \mathbb{Z}^{|A_k|}$, which is the case if and only if  $H^{\square}_k(X_{B},X_{C})=0$ by \eqref{equ::16}. 

Recall that $A_{(k-1)} \subseteq X_{C} \subseteq X_{B}$ and $A_{(k)} \subseteq X_{B}$. Moreover, we see that $C_{k}^{\square}(X_{B},X_{C}) \cong C_{k}(X_{B},A_{(k)})/C_k(X_{C},A_{(k-1)})$. Thus we obtain the following commutative diagram associated with \eqref{equ::15}:
		\begin{equation}\label{equ::17}
			\begin{tikzcd}
				& 0 \arrow[d]                      & 0 \arrow[d]                        & 0 \arrow[d]                        &        \\
				0 \arrow[r] & {C_k(X_{C},A_{(k-1)})}=0 \arrow[r,"\partial_k"] \arrow[d,"i"]   & {C_{k-1}(X_{C},A_{(k-1)})} \arrow[r,"\partial_{k-1}"] \arrow[d,"i"] & {C_{k-2}(X_{C},A_{(k-1)})} \arrow[d,"i"] \arrow[r] & \cdots \\
				0 \arrow[r] & {C_{k}(X_{B},A_{(k)})} \arrow[r,"\partial_{k}"] \arrow[d,"j"] & {C_{k-1}(X_{B},A_{(k)})} \arrow[r,"\partial_{k-1}"] \arrow[d,"j"] & {C_{k-2}(X_{B},A_{(k)})} \arrow[d,"j"] \arrow[r] & \cdots \\
				0 \arrow[r] & {C^{\square}_{k}(X_{B},X_{C})} \arrow[r,"\partial^{\square}_{k}"] \arrow[d] & {C_{k-1}(X_{B},X_{C})} \arrow[r,"\partial_{k-1}"] \arrow[d] & {C_{k-2}(X_{B},X_{C})} =0\arrow[d] \arrow[r] & \cdots \\
				& 0                                & 0                                  & 0                                  &       
			\end{tikzcd},
		\end{equation}
		where $i$ is the inclusion, $j$ is the quotient map, and $\partial_{\bullet}$ are the corresponding relative boundary maps. Since the columns are exact sequences and the rows are chain complexes, the diagram  \eqref{equ::17}  gives a  short exact sequence of chain complexes. By the zig-zag lemma (see, for example, \cite[p. 116]{Hat02}), the diagram \eqref{equ::17} stretches out into a long exact sequence of relative homology groups:
		\begin{equation}\label{equ::18}
			\begin{aligned}
				 &H_k(X_{C},A_{(k-1)})=0\longrightarrow H_k(X_{B},A_{(k)})\longrightarrow H^{\square}_k(X_{B},X_{C}) \\
				 \longrightarrow&H_{k-1}(X_{C},A_{(k-1)})\longrightarrow H_{k-1}(X_{B},A_{(k)})\longrightarrow 
				 H_{k-1}(X_{B},X_{C})\\
				\longrightarrow  &H_{k-2}(X_{C},A_{(k-1)})\longrightarrow H_{k-2}(X_{B},A_{(k)})\longrightarrow H_{k-2}(X_{B},X_{C})=0.
			\end{aligned}
		\end{equation}
		
		If $X_{B} \in \mathcal{T}_k(X,A)$ and $X_{C} \in \mathcal{F}_{k-1}(X,A)$, by Definition \ref{def::3.1}, we have $\beta_k(X_{B},A_{(k)}) = 0$ and $\beta_{k-1}(X_{C},A_{(k-1)}) = 0$. This implies that $H_k(X_{B},A_{(k)}) = 0$ and $H_{k-1}(X_{C},A_{(k-1)}) = 0$, since both groups are torsion-free. Therefore, from the long exact sequence \eqref{equ::18}, we immediately deduce that $H^{\square}_k(X_{B},X_{C}) = 0$, or equivalently, $\partial_k[B,C]$ is nonsingular.
		
		Conversely, if $\partial_k[B,C]$ is nonsingular, then $H^{\square}_k(X_{B},X_{C}) = 0$. From the long exact sequence \eqref{equ::18}, we obtain $H_k(X_{B},A_{(k)}) = 0$, which implies $\beta_k(X_{B},A_{(k)}) = 0$. Since $f_k(X_{B},A_{(k)}) = |B| = f_k(X,A) - \beta_k(X_{(k)},A_{(k)})$ and $\beta_{k-1}(X,A) = 0$, by Lemma \ref{lem::3.1} and Definition \ref{def::3.1}, we obtain $\beta_{k-1}(X_{B},A_{(k)}) = \beta_{k-1}(X,A) = 0$. Thus, $X_{B} \in \mathcal{T}_k(X,A)$ and $H_{k-1}(X_{B},A_{(k)})$ is a finite group. From the long exact sequence \eqref{equ::18} and the fact that $H_{k-1}(X_{C},A_{(k-1)})$ is torsion-free, we deduce that $H_{k-1}(X_{C},A_{(k-1)}) = 0$, and consequently $\beta_{k-1}(X_{C},A_{(k-1)}) = 0$. Moreover, $f_{k-1}(X_{C},A_{(k-1)}) = |X_{k-1}| - |A_{k-1}| - |C| = f_{k-1}(X,A) - \beta_{k-1}(X_{(k-1)},A_{(k-1)})$. Therefore, again by Lemma \ref{lem::3.1} and Definition \ref{def::3.1}, we conclude that $X_{C} \in \mathcal{F}_{k-1}(X,A)$.
	\end{proof}
	
	\begin{lemma}\label{lem::3.5}
		If  $\partial_k[B,C]$ is nonsingular, then 
		\begin{equation*}
			|\det \partial_k[B,C]|=|H_{k-1}(X_{B},A_{(k)})|\cdot \frac{|H_{k-2}(X_{C},A_{(k-1)})/\mathbb{Z}^{\beta}|}{|H_{k-2}(X,A)/\mathbb{Z}^{\beta}|},
		\end{equation*}
		where $\beta=\beta_{k-2}(X,A)$.
	\end{lemma}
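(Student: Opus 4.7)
The plan is to identify $|\det \partial_k[B,C]|$ with $|H_{k-1}(X_B,X_C)|$, and then read the desired factorization off the long exact sequence \eqref{equ::18}.

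First, since $X_C$ contains $X_{(k-2)}$, we have $C_i(X_B,X_C)=0$ for all $i\leq k-2$, so the chain complex \eqref{equ::15} collapses to the two-term sequence
\begin{equation*}
0\longrightarrow C^{\square}_k(X_B,X_C) \overset{\partial^{\square}_k}{\longrightarrow} C_{k-1}(X_B,X_C) \longrightarrow 0
\end{equation*}
of free $\mathbb{Z}$-modules of rank $|B|=|C|$, with $\partial^{\square}_k$ represented by the matrix $\partial_k[B,C]$. Nonsingularity of this matrix implies that $\partial^{\square}_k$ is injective, so $H^{\square}_k(X_B,X_C)=0$, and the Smith normal form identifies $H_{k-1}(X_B,X_C)=\operatorname{coker}\partial^{\square}_k$ as a finite group of order $|\det \partial_k[B,C]|$.

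Next, Lemma~\ref{lem::3.4} gives $X_B\in\mathcal{T}_k(X,A)$ and $X_C\in\mathcal{F}_{k-1}(X,A)$; moreover the top-degree relative homologies $H_k(X_B,A_{(k)})$ and $H_{k-1}(X_C,A_{(k-1)})$ are torsion-free of rank $0$, hence vanish. Plugging these vanishings, together with $H^{\square}_k(X_B,X_C)=0$, into \eqref{equ::18} leaves the exact sequence
\begin{equation*}
0 \longrightarrow H_{k-1}(X_B,A_{(k)}) \longrightarrow H_{k-1}(X_B,X_C) \longrightarrow H_{k-2}(X_C,A_{(k-1)}) \longrightarrow H_{k-2}(X_B,A_{(k)}) \longrightarrow 0.
\end{equation*}
Since $X_B\supseteq X_{(k-1)}$, the chain complexes of $(X_B,A_{(k)})$ and $(X,A)$ coincide in degrees $\leq k-1$, so $H_{k-2}(X_B,A_{(k)})\cong H_{k-2}(X,A)$.

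Finally, let $K$ denote the kernel of the surjection $H_{k-2}(X_C,A_{(k-1)}) \longrightarrow H_{k-2}(X,A)$; the four-term exact sequence breaks into two short exact sequences meeting at $K$. Since $H_{k-1}(X_B,X_C)$ is finite, so is $K$, and the first short exact sequence yields $|H_{k-1}(X_B,X_C)| = |H_{k-1}(X_B,A_{(k)})|\cdot|K|$. The second exhibits $K$ as a finite subgroup of the torsion part of $H_{k-2}(X_C,A_{(k-1)})$ with quotient $H_{k-2}(X,A)$ (the two free ranks match at $\beta$); thus $|K| = |T(H_{k-2}(X_C,A_{(k-1)}))|/|T(H_{k-2}(X,A))|$, where $T(\cdot)$ denotes the torsion subgroup. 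Interpreting $|H/\mathbb{Z}^{\beta}|$ as $|T(H)|$ (the torsion order after splitting off the rank-$\beta$ free summand) delivers the claimed identity. The principal subtlety is this rank-$\beta$ bookkeeping: one must verify carefully that $K$ is captured exactly by the ratio of torsion orders, which rests on the fact that both $H_{k-2}$'s share the same free rank $\beta$ and $K$ is finite; otherwise the quotient notation $|H/\mathbb{Z}^{\beta}|$ would not give the correct finite ratio.
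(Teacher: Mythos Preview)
Your proposal is correct and follows essentially the same route as the paper: identify $|\det\partial_k[B,C]|$ with $|H_{k-1}(X_B,X_C)|$ via the Smith normal form, then use Lemma~\ref{lem::3.4} together with the long exact sequence \eqref{equ::18} to split off the two short exact sequences and compute $|K|$ as a ratio of torsion orders. The only cosmetic difference is that the paper verifies $r(\ker f)=0$ by rank-counting from the two short exact sequences, whereas you obtain finiteness of $K$ more directly as a quotient of the finite group $H_{k-1}(X_B,X_C)$; both arguments are equivalent.
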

	\begin{proof}

	Since $f_{k-2}(X_{B}, X_{C}) = 0$, $f_{k-1}(X_{B}, X_{C}) = |B| = |C|$, and $f_k(X_{B}, X_{C}) = |B| + |A_k|$, we have  
$C_{k-2}(X_{B}, X_{C}) = 0$, $C_{k-1}(X_{B}, X_{C}) \cong \mathbb{Z}^{|B|}$, and $C_k(X_{B}, X_{C}) \cong \mathbb{Z}^{|B| + |A_k|}$. Then, $\ker \partial_{k-1}(X_{B}, X_{C}) = C_{k-1}(X_{B}, X_{C}) \cong \mathbb{Z}^{|B|}$ and $\operatorname{im} \partial_k(X_{B}, X_{C}) = \partial_k(C_k(X_{B}, X_{C}))\subseteq C_{k-1}(X_{B}, X_{C})$. Let $M$ be the  $\mathbb{Z}$-matrix representation of $\partial_k(X_{B}, X_{C})$ under the basis corresponding to  $[B] \cup [A_k]$. We have
\[
H_{k-1}(X_{B}, X_{C}) = \ker \partial_{k-1}(X_{B}, X_{C}) / \operatorname{im} \partial_k(X_{B}, X_{C}) \cong \mathbb{Z}^{|B|} / M(\mathbb{Z}^{|B| + |A_k|}).
\]
Note that $\partial_k(X_{B}, X_{C}; \mathbb{R})$ is also a $\mathbb{Z}$-matrix. By \eqref{equ::3} and \eqref{equ::13}, we assert that
\[
M = \partial_k(X_{B}, X_{C}; \mathbb{R}) = \begin{bmatrix}
\partial_k[B,C] & 0_{A_k}
\end{bmatrix}.
\]
 Hence, $M(\mathbb{Z}^{|B| + |A_k|}) \subseteq \mathbb{Z}^{|C|} = \mathbb{Z}^{|B|}$. Since $\partial_k[B,C]$ is nonsingular, the rank of $M$ is equal to $|B|$. Using the Smith normal form, we can find a generating set $\{w_1, \ldots, w_{|B|}\}$ of $\mathbb{Z}^{|B|}$ and a generating set $\{v_1, \ldots, v_{|B|}\}$ of $M(\mathbb{Z}^{|B| + |A_k|})$ such that
\[
v_i = \alpha_i w_i, \quad \text{for } 1 \leq i \leq |B|,
\]
where the $\alpha_i$ are invariant factors satisfying $\alpha_i \mid \alpha_{i+1}$. Therefore,
\[
\mathbb{Z}^{|B|} / M(\mathbb{Z}^{|B| + |A_k|}) \cong \mathbb{Z}/\alpha_1\mathbb{Z} \oplus \cdots \oplus \mathbb{Z}/\alpha_{|B|}\mathbb{Z},
\]
which implies that 
\begin{equation}\label{equ::19}
|H_{k-1}(X_{B}, X_{C})| =\prod_{i=1}^{|B|}\alpha_i= |\det \partial_k[B,C]|.
\end{equation}

Recall that $\partial_k[B,C]$ is nonsingular. By Lemma \ref{lem::3.4}, we have $X_{C} \in \mathcal{F}_{k-1}(X,A)$, and hence $H_{k-1}(X_{C},A_{(k-1)}) = 0$. Then the long exact sequence \eqref{equ::18} yields the following exact sequence:
\begin{equation*}
    \begin{aligned}
        0 &\longrightarrow  H_{k-1}(X_{B},A_{(k)}) \longrightarrow  H_{k-1}(X_{B},X_{C}) 
       \longrightarrow  H_{k-2}(X_{C},A_{(k-1)}) \\
        & \overset{f}{\longrightarrow} H_{k-2}(X_{B},A_{(k)})=H_{k-2}(X,A) \longrightarrow 0.
    \end{aligned}
\end{equation*}
It follows that both
\begin{equation}\label{equ::20}
 \begin{aligned}
        0 \longrightarrow H_{k-1}(X_{B},A_{(k)})  \longrightarrow H_{k-1}(X_{B},X_{C})  \longrightarrow \ker f  \longrightarrow 0
    \end{aligned}
\end{equation}
and 
\begin{equation}\label{equ::21}
\begin{aligned}
        0 \longrightarrow \ker f \longrightarrow H_{k-2}(X_{C},A_{(k-1)}) \overset{f}{\longrightarrow} H_{k-2}(X,A) \longrightarrow 0
    \end{aligned}
\end{equation}
are short exact sequences. Let $r(\ker f)$ denote the rank of the largest free $\mathbb{Z}$-module summand of $\ker f$. From \eqref{equ::20} and \eqref{equ::21}, we obtain 
\begin{equation}\label{equ::22}
    \beta_{k-1}(X_{B},A_{(k)}) + r(\ker f) = \beta_{k-1}(X_{B},X_{C}),
\end{equation} 
and
\begin{equation}\label{equ::23}
    r(\ker f) + \beta_{k-2}(X,A) = \beta_{k-2}(X_{C},A_{(k-1)}),
\end{equation}
respectively. Since $X_{C} \in \mathcal{F}_{k-1}(X,A)$, we have $\beta_{k-2}(X_{C},A_{(k-1)}) = \beta_{k-2}(X,A)$, and hence $r(\ker f) = 0$ by \eqref{equ::23}. By Lemma \ref{lem::3.4}, we obtain $X_{B}\in\mathcal{T}_k(X,A)$, which implies $\beta_{k-1}(X_{B},A_{(k)}) = 0$. Combining this with \eqref{equ::22} and the fact that $r(\ker f) = 0$, we get $\beta_{k-1}(X_{B},X_{C})= 0$. Therefore, all groups in the short exact sequence \eqref{equ::20} are finite. By the first isomorphism theorem,
\begin{equation}\label{equ::24}
    |\ker f|=\frac{|H_{k-1}(X_{B},X_{C})|}{|H_{k-1}(X_{B},A_{(k)})|}.
\end{equation}

Now we compute $|\ker f|$ in an alternative way. Let $\beta = \beta_{k-2}(X_{C}, A_{(k-1)}) = \beta_{k-2}(X, A)$. We can express $H_{k-2}(X_{C}, A_{(k-1)})$ as 
\[
H_{k-2}(X_{C}, A_{(k-1)})=(H_{k-2}(X_{C}, A_{(k-1)}) / \mathbb{Z}^{\beta}) \oplus \mathbb{Z}^{\beta}.
\]
Since $r(\ker f) = 0$, we can regard $\ker f$ as a subgroup of $H_{k-2}(X_{C}, A_{(k-1)}) / \mathbb{Z}^{\beta}$. Applying the first isomorphism theorem to the short exact sequence \eqref{equ::21}, we get
\[
\big( (H_{k-2}(X_{C}, A_{(k-1)}) / \mathbb{Z}^{\beta}) / \ker f \big) \oplus \mathbb{Z}^{\beta} \cong (H_{k-2}(X, A) / \mathbb{Z}^{\beta}) \oplus \mathbb{Z}^{\beta},
\]
which implies
\begin{equation}\label{equ::25}
|\ker f| = \frac{|H_{k-2}(X_{C}, A_{(k-1)}) / \mathbb{Z}^{\beta}|}{|H_{k-2}(X, A) / \mathbb{Z}^{\beta}|}.
\end{equation}
Combining \eqref{equ::19} with \eqref{equ::24} and \eqref{equ::25}, we obtain the desired result.
	\end{proof}
	
	Now we are ready to give the proof of Theorem \ref{thm::main1}.
	\renewcommand\proofname{\bf{Proof of Theorem \ref{thm::main1}}} 
	\begin{proof}
		By Lemma \refeq{lem::3.2}, it suffices to prove (i) and (ii) under the assumption that $\beta_{k-1}(X,A)=0$.
		
First we consider (i). Note that  $L^{\operatorname{ud}}_{k-1}(X,A)=L^{\operatorname{ud}}_{k-1}(X_{(k)},A_{(k)})$ is  a square matrix of order $f_{k-1}(X,A)=f_{k-1}(X_{(k)},A_{(k)})$. By Lemma \ref{lem::2.4}, the  rank of $L^{\operatorname{ud}}_{k-1}(X,A)$ is equal to
\[
\chi_{k-1}(X_{(k)},A_{(k)})=f_{k}(X_{(k)},A_{(k)})-\beta_{k}(X_{(k)},A_{(k)})=f_{k}(X,A)-\beta_{k}(X_{(k)},A_{(k)}).
\] 
Let $p(y):=\det(yI-L^{\operatorname{ud}}_{k-1}(X,A))$ denote the characteristic polynomial of $L^{\operatorname{ud}}_{k-1}(X,A)$, and let $\pi$ be the product of all nonzero eigenvalues of $L^{\operatorname{ud}}_{k-1}(X,A)$. Then $\pi$ is given (up to sign) by the coefficient of the monomial $y^{f_{k-1}(X,A)-(f_k(X,A)-\beta_{k}(X_{(k)},A_{(k)}))}$ in $p(y)$. That is, 
		\begin{equation*}
			\pi=\sum_{\substack{C\in X_{k-1}\setminus A_{k-1}\\|C|=f_{k}(X,A)-\beta_{k}(X_{(k)},A_{(k)})}}\det L_C,
		\end{equation*}
		where $L_C$ is the principle submatrix of $L^{\operatorname{ud}}_{d-1}(X,A)$ indexed by $C$. By the Cauchy-Binet formula (cf. \cite[p. 28]{Hor13}), we have 
		\begin{equation*}
			\det L_C=\sum_{\substack{B\subseteq X_k\setminus A_k\\ |B|=|C|}} (\det\partial_d[B,C])^2.
		\end{equation*}
Recall that, by Lemma \ref{lem::3.4}, $\det\partial_k[B,C]\neq 0$ if and only if $X_B\in \mathcal{T}_k(X,A)$ and $X_{C}\in\mathcal{F}_{k-1}(X,A)$. Thus,
		\begin{equation*}
			\pi=\sum_{\substack{B\subseteq X_k\setminus A_k\\ X_B\in\mathcal{T}_k(X,A)}}\sum_{\substack{C\subseteq X_{k-1}\setminus A_{k-1}\\ X_{C}\in\mathcal{F}_{k-1}(X,A)}}(\det\partial_k[B,C])^2.
		\end{equation*}
Combining this with Lemma \ref{lem::3.5} and the arguments below Lemma \ref{lem::3.3}, we deduce that
\begin{equation*}
\begin{aligned}
				\pi&=\sum_{\substack{B\subseteq X_k\setminus A_k\\ X_B\in\mathcal{T}_k(X,A)}} |H_{k-1}(X_B,A_{(k)})|^2  \cdot \sum_{\substack{C\subseteq X_{k-1}\setminus A_{k-1}\\ X_{C}\in\mathcal{F}_{k-1}(X,A)}}\frac{|H_{k-2}(X_C,A_{(k-1)})/\mathbb{Z}^{\beta}|^2}{|H_{k-2}(X,A)/\mathbb{Z}^{\beta}|^2}\\
&=\sum_{\Upsilon\in \mathcal{T}_k(X,A)}|H_{k-1}(\Upsilon,A_{(k)})|^2 \cdot \sum_{\Gamma\in \mathcal{F}_{k-1}(X,A)}\frac{|H_{k-2}(\Gamma,A_{(k-1)})/\mathbb{Z}^{\beta}|^2}{|H_{k-2}(X,A)/\mathbb{Z}^{\beta}|^2}.
				\end{aligned}
			\end{equation*}
This proves (i).

		Now we consider (ii). Let $C=X_{k-1}\setminus \Gamma_{k-1}$. We see that $C\subseteq X_{k-1}\setminus A_{k-1}$, $|C|=\beta_{k-1}(X_{(k-1)}, A_{(k-1)})=f_k(X,A) - \beta_k(X_{(k)}, A_{(k)})$ and $\Gamma=X_C$. 
		By Lemma \ref{lem::2.3} and the Cauchy-Binet formula, we get 
		\begin{equation*}
			\det L_{k-1}^{\operatorname{ud}}(X,\Gamma)=\sum_{\substack{B\subseteq X_k\\|B|=|C|}}(\det\partial_k[B,C])^2.
		\end{equation*}
Since $A_k \cap C = \emptyset$, we have $\det \partial_k[B,C] = 0$ for all $B \subseteq X_k$ satisfying $B \cap A_k \neq \emptyset$ and $|B| = |C|$. Moreover, because $X_C = \Gamma \in \mathcal{F}_{k-1}(X,A)$, it follows from Lemma \ref{lem::3.4} that for any $B \subseteq X_k \setminus A_k$ with $|B| = |C|$, $\det \partial_k[B,C] \neq 0$ if and only if $X_B \in \mathcal{T}_k(X,A)$. Thus,
		\begin{equation*}
			\det L_{k-1}^{\operatorname{ud}}(X,\Gamma)=\sum_{\substack{B \subseteq X_k \setminus A_k\\X_{B}\in\mathcal{T}_k(X,A)}}(\det\partial_k[B,C])^2.
		\end{equation*}
From Lemma \ref{lem::3.5} and the arguments below Lemma \ref{lem::3.3}, we obtain
		\begin{equation*}
			\begin{aligned}
				\det L_{k-1}^{\operatorname{ud}}(X,\Gamma)&=\sum_{\substack{B \subseteq X_k \setminus A_k\\X_{B}\in\mathcal{T}_k(X,A)}}|H_{k-1}(X_{B},A_{(k)})|^2\cdot \frac{|H_{k-2}(X_{C},A_{(k-1)})/\mathbb{Z}^{\beta}|^2}{|H_{k-2}(X,A)/\mathbb{Z}^{\beta}|^2}\\
				&=\frac{|H_{k-2}(\Gamma,A_{(k-1)})/\mathbb{Z}^{\beta}|^2}{|H_{k-2}(X,A)/\mathbb{Z}^{\beta}|^2}\sum_{\Upsilon\in \mathcal{T}_k(X,A)}|H_{k-1}(\Upsilon,A_{(k)})|^2,
			\end{aligned}
		\end{equation*}
and  (ii) follows.
	\end{proof}
	
\section{Proofs of Theorems \ref{thm::main2}--\ref{thm::main4} and \ref{thm::main5}}\label{section::4}

Let $X$ be a simplicial complex. For any $\sigma\in X$,  the \textit{link} of $\sigma$ in $X$ is defined as
	$$\mathrm{lk}(X,\sigma)=\{\tau\in X:\tau\cup\sigma\in X,\tau\cap\sigma=\emptyset\}.$$ 	
	\begin{lemma}[{\cite[Lemma 1.4]{Lew20}, \cite[Lemma 4.3]{ZHL26}}]\label{lem::4.1}
		Let $X$ be a simplicial complex on vertex set $V$, with $h(X)=h$. Let $\sigma\in X_k$ and $v\in V\setminus \sigma$. If $v\notin \mathrm{lk}(X,\sigma)$, then
		\begin{equation*}			
		|\{\tau\in\sigma_{k-1}:v\in\mathrm{lk}(X,\tau)\}| \leq h.
		\end{equation*}
		\end{lemma}
Based on Lemma \ref{lem::4.1} and Theorem \ref{thm::Lew}, we now present the proof of Theorem  \ref{thm::main2}.

\renewcommand\proofname{\bf{Proof of Theorem \ref{thm::main2}}}
\begin{proof}
Since $\deg_{X}(\sigma) \leq 1$ for all $\sigma \in A_{k-1}$, we have the following two facts:
\begin{itemize}
    \item $\sigma \cap \tau \notin A_{k-1}$ for any $\sigma, \tau \in X_k$;
    \item $X'_k=X_k\setminus A_k$ and $X'_{k+1}=X_{k+1}$.
\end{itemize}
Combining these facts with Lemma \ref{lem::2.2}, we see that for any $\sigma,\tau\in X_k\setminus A_k=X_k'$,
		\begin{equation}\label{equ::27}
		\begin{aligned}
			L_k(X,A)(\sigma,\tau)&=\begin{cases} 
				\deg_X(\sigma)+|\sigma_{k-1}\setminus A_{k-1}| & \mbox{if $\sigma=\tau$}, \\
				(-1)^{\epsilon(\sigma,\tau)} &\mbox{if $\sigma\cup\tau\notin X_{k+1},~\sigma\cap\tau\in X_{k-1}\setminus A_{k-1}$}, \\
				0 & \mbox{otherwise},
			\end{cases}\\
			&=\begin{cases} 
				\deg_{X'}(\sigma)+|\sigma_{k-1}\setminus A_{k-1}| & \mbox{if $\sigma=\tau$}, \\
				(-1)^{\epsilon(\sigma,\tau)} &\mbox{if $\sigma\cup\tau\notin X'_{k+1},~\sigma\cap\tau\in X'_{k-1}$}, \\
				0 & \mbox{otherwise}.
				\end{cases}
				\end{aligned}
		\end{equation}
Let $V'$ denote the vertex set of $X'$. From the Ger\v{s}gorin circle theorem (cf. \cite[Theorem 6.1.1]{Hor13}), \eqref{equ::27} and  Lemma \ref{lem::4.1}, we obtain 
		\begin{align*}
			\mu_k(X,A)&\geq \min_{\sigma\in X'_k} \Bigg( L_k(X,A)(\sigma,\sigma)-\sum_{\substack{\tau\in X'_k\\\tau\neq \sigma}}|L_k(X,A)(\sigma,\tau)|\Bigg)\\
			&= \min_{\sigma\in X'_k} \left(\deg_{X'}(\sigma)+|\sigma_{k-1}\setminus A_{k-1}|-|\{\tau\in X'_k:|\sigma\cap \tau|=k,\sigma\cup\tau\notin X'_{k+1}\}|\right)
			\\
			&= \min_{\sigma\in X'_k} \Bigg(\deg_{X'}(\sigma)+|\sigma_{k-1}\setminus A_{k-1}|-\sum_{\eta\in \sigma_{k-1}}|\{v\in V'\setminus \sigma:v\in\operatorname{lk}(X',\eta),v\notin \operatorname{lk}(X',\sigma)\}|\Bigg)\\
			&=\min_{\sigma\in X'_k} \Bigg(\deg_{X'}(\sigma)+|\sigma_{k-1}\setminus A_{k-1}|-\sum_{\substack{v\in V'\setminus \sigma\\ v\notin \operatorname{lk}(X',\sigma)}}|\{\eta\in\sigma_{k-1}:v\in\operatorname{lk}(X',\eta)\}|\Bigg)\\
			&\geq \min_{\sigma\in X'_k} \Bigg(\deg_{X'}(\sigma)+|\sigma_{k-1}\setminus A_{k-1}|-\sum_{\substack{v\in V'\setminus \sigma\\ v\notin \operatorname{lk}(X',\sigma)}}h'\Bigg)\\
			&\geq \min_{\sigma\in X'_k} \left(\deg_{X'}(\sigma)+k+1-|\sigma_{k-1}\cap A_{k-1}|-(n'-k-1-\deg_{X'}(\sigma))h'\right)\\
			&=\min_{\sigma\in X'_k}\left((h'+1)\deg_{X'}(\sigma)-|\sigma_{k-1}\cap A_{k-1}|\right)+(h'+1)(k+1)-h'n'\\
			&\geq (h' + 1)(k + 1) - h'n' - \max_{\sigma \in X'_k} |\sigma_{k-1} \cap A_{k-1}|.
		\end{align*}
This proves the first part of the theorem.

Now we consider the case where equality holds.   By Lemma \ref{lem::2.1}, for any $\sigma,\tau\in X'_k$,
		\begin{equation}\label{equ::28}
				L_k(X')(\sigma,\tau)=\begin{cases} 
					\deg_{X'}(\sigma)+k+1 & \mbox{if $\sigma=\tau$}, \\
					(-1)^{\epsilon(\sigma,\tau)} &\mbox{if $\sigma\cup\tau\notin X'_{k+1},~\sigma\cap\tau\in X'_{k-1}$}, \\
					0 & \mbox{otherwise}.
				\end{cases}
		\end{equation}
Let $R$ be the diagonal matrix indexed by $X'_k$ with entries $R(\sigma,\sigma) = |\sigma_{k-1} \cap A_{k-1}|$ for $\sigma \in X'_k$. From \eqref{equ::27} and \eqref{equ::28}, we obtain the relation $L_k(X,A) = L_k(X') - R$. Then, by the Weyl inequality (cf. \cite[Theorem 4.3.1]{Hor13}),
\begin{equation}\label{equ::29}
    \mu_k(X,A) \geq \mu_k(X') - \max_{\sigma \in X'_k} |\sigma_{k-1} \cap A_{k-1}|.
\end{equation}
Moreover, equality $\mu_k(X,A) = \mu_k(X') - \max_{\sigma \in X'_k} |\sigma_{k-1} \cap A_{k-1}|$ holds if and only if there exists a nonzero vector $\boldsymbol{x}$ satisfying $L_k(X')\boldsymbol{x}=\mu_k(X')\boldsymbol{x}$, $R\boldsymbol{x}=(\max_{\sigma\in X'_k}|\sigma_{k-1}\cap A_{k-1}|)\boldsymbol{x}$, and $L_k(X,A)\boldsymbol{x}=\mu_k(X,A)\boldsymbol{x}$.
On the other hand, Theorem \ref{thm::Lew} implies that
\begin{equation}\label{equ::30}
    \mu_k(X') \geq (h' + 1)(k + 1) - h'n'.
\end{equation}

If $\mu_k(X,A)=(h'+1)(k+1)-h'n'-\max_{\sigma\in X'_k}|\sigma_{k-1}\cap A_{k-1}|$, then from \eqref{equ::29} and \eqref{equ::30}, we immediately obtain 
\[
\mu_k(X')=(h'+1)(k+1)-h'n'\]
 and 
 \[
 \mu_k(X,A)=\mu_k(X')-\max_{\sigma\in X'_k}|\sigma_{k-1}\cap A_{k-1}|.
 \] 
 By Theorem \ref{thm::Lew}, the first equality implies that
 \[
 X'\cong(\Delta^{h'}_{(h'-1)})^{*(n'-k-1)}*\Delta^{(h'+1)(k+1)-h'n'-1}.
 \] 
 Moreover, by the above arguments, the second  equality suggests that there exists a nonzero vector $\boldsymbol{x}$ such that $L_k(X')\boldsymbol{x}=\mu_k(X')\boldsymbol{x}$ and $R\boldsymbol{x}=(\max_{\sigma\in X'_k}|\sigma_{k-1}\cap A_{k-1}|)\boldsymbol{x}$. Since $\boldsymbol{x}$ has no zero entries by Theorem \ref{thm::Lew}, we assert that $R$ must be a scalar matrix, i.e., $|\sigma_{k-1}\cap A_{k-1}|=c$  (a constant) for all $\sigma\in X'_k$. 

Conversely, if $X'\cong(\Delta^{h'}_{(h'-1)})^{*(n'-k-1)}*\Delta^{(h'+1)(k+1)-h'n'-1}$ and $|\sigma_{k-1}\cap A_{k-1}|=c$ (a constant) for all $\sigma\in X'_k$, then $L_k(X,A)=L_k(X')-R=L_k(X')-cI$, where $I$ is the identity matrix. Combining this with Theorem \ref{thm::Lew}, we obtain
\begin{equation*}
\mu_k(X,A)=\mu_k(X')-c=(h'+1)(k+1)-h'n'-\max_{\sigma\in X'_k}|\sigma_{k-1}\cap A_{k-1}|.
\end{equation*}
The result follows.
\end{proof}
		
To prove Theorem \ref{thm::main3}, we will require the notation of $k$-th additive compound matrices. Let $M$ be an $n \times n$ matrix over a field $\mathbb{F}$, and let $1 \leq k \leq n$. The \textit{$k$-th additive compound} of $M$ is an $\binom{n}{k} \times \binom{n}{k}$ matrix $M^{[k]}$, whose rows and columns are indexed by the $k$-subsets of $[n]$, defined by
\begin{equation}\label{equ::31}
    M^{[k]}(\sigma, \tau) = 
    \begin{cases} 
        \sum_{i \in \sigma} M(i, i) & \text{if $\sigma = \tau$}, \\
        (-1)^{\epsilon(\sigma, \tau)} M(i, j) & \text{if $|\sigma \cap \tau| = k - 1$, $\sigma \setminus \tau = \{i\}$, and $\tau \setminus \sigma = \{j\}$}, \\
        0 & \text{otherwise}.
    \end{cases}
\end{equation}

	\begin{lemma}[{\cite[Theorem 2.1]{Fie74}}]\label{lem::4.2}
	    Let $M$ be an $n\times n$ matrix over a field $\mathbb{F}$, with eigenvalues $\lambda_1,\ldots,\lambda_n$. Then, the $k$-th additive compound $M^{[k]}$ has eigenvalues $\lambda_{i_1}+\cdots+\lambda_{i_k}$, for $1\leq i_1<\cdots<i_k\leq n$.
	\end{lemma}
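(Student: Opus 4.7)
The plan is to realize $M^{[k]}$ as the matrix of a canonical derivation on the exterior power $\bigwedge^{k} V$, where $V = \mathbb{F}^{n}$. Once this identification is in place, the eigenvalue statement follows essentially by inspection in the diagonalizable case, and a density / polynomial-identity argument extends it to arbitrary $M$.

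First, I would equip $\bigwedge^{k} V$ with the basis $\{e_{\sigma} = e_{i_1} \wedge \cdots \wedge e_{i_k} : \sigma = \{i_1 < \cdots < i_k\} \subseteq [n]\}$, and define the $\mathbb{F}$-linear derivation $D_{M}$ on $\bigwedge^{k} V$ via the Leibniz rule
\[
D_{M}(v_1 \wedge \cdots \wedge v_k) = \sum_{j=1}^{k} v_1 \wedge \cdots \wedge v_{j-1} \wedge M v_{j} \wedge v_{j+1} \wedge \cdots \wedge v_{k}.
\]
A direct computation of $D_{M}(e_{\sigma})$ shows that: the coefficient of $e_{\sigma}$ in $D_{M}(e_{\sigma})$ is $\sum_{i \in \sigma} M(i,i)$; the coefficient of $e_{\tau}$, for $\tau$ differing from $\sigma$ by a single swap $i \leftrightarrow j$ (so $\sigma\setminus\tau = \{i\}$, $\tau\setminus\sigma=\{j\}$), is $\pm M(i,j)$, where the sign is produced by reordering the wedge into increasing order and matches $(-1)^{\epsilon(\sigma,\tau)}$ in \eqref{equ::31}; and all other coefficients vanish. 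Hence the matrix of $D_{M}$ in this basis is exactly $M^{[k]}$.

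Next, assume $M$ is diagonalizable (over $\overline{\mathbb{F}}$ if necessary) with eigenbasis $x_1, \ldots, x_n$ and $M x_i = \lambda_i x_i$. For any $k$-subset $\sigma = \{i_1 < \cdots < i_k\}$ the Leibniz rule gives
\[
D_{M}(x_{i_1} \wedge \cdots \wedge x_{i_k}) = (\lambda_{i_1} + \cdots + \lambda_{i_k})\, x_{i_1} \wedge \cdots \wedge x_{i_k}.
\]
Since the $\binom{n}{k}$ such wedges form a basis of $\bigwedge^{k} V$, the operator $D_{M}$, and therefore $M^{[k]}$, has precisely the eigenvalues $\lambda_{i_1} + \cdots + \lambda_{i_k}$ for $1 \leq i_1 < \cdots < i_k \leq n$.

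The main obstacle is the possibly non-diagonalizable case (and, over a finite field, the lack of a continuity argument). I would handle this uniformly by viewing everything as a polynomial identity in the entries of $M$. The coefficients of the characteristic polynomial of $M^{[k]}$ are polynomials in the entries of $M$ with integer coefficients, and the elementary symmetric functions of the multiset $\{\lambda_{i_1} + \cdots + \lambda_{i_k}\}_{i_1<\cdots<i_k}$ are symmetric polynomials in $\lambda_1,\ldots,\lambda_n$, hence polynomials in the coefficients of the characteristic polynomial of $M$, and therefore also polynomials in the entries of $M$ with integer coefficients. The diagonalizable case verifies these two polynomial expressions on a Zariski-dense set of matrices over $\mathbb{C}$, so they agree identically as polynomials over $\mathbb{Z}$; specializing this identity to $\mathbb{F}$ proves the claim for arbitrary $M$ over every field.
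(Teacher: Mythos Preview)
The paper does not prove this lemma; it simply quotes it from Fiedler~\cite{Fie74}. So there is no in-paper argument to compare against, and your proposal is a self-contained proof of a result the authors take as a black box.

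Your approach is correct and standard: identifying $M^{[k]}$ with the Leibniz-rule derivation $D_M$ on $\bigwedge^{k}\mathbb{F}^n$ immediately yields the eigenvalues in the diagonalizable case, and the Zariski-density / integer-polynomial-identity argument cleanly handles all matrices over every field. One small slip: with the usual column convention $Me_j=\sum_i M(i,j)e_i$, the coefficient of $e_\tau$ in $D_M(e_\sigma)$ (for $\sigma\setminus\tau=\{i\}$, $\tau\setminus\sigma=\{j\}$) is $\pm M(j,i)$, not $\pm M(i,j)$. This is harmless---it just means that the coefficient of $e_\sigma$ in $D_M(e_\tau)$ is $\pm M(i,j)$, which is exactly $M^{[k]}(\sigma,\tau)$ in \eqref{equ::31}, so the matrix of $D_M$ really is $M^{[k]}$ (and in any case eigenvalues are invariant under transpose). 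The sign check against $(-1)^{\epsilon(\sigma,\tau)}$ is straightforward since $\epsilon$ counts the elements of $\sigma\cap\tau$ between the swapped vertices, which is precisely the number of transpositions needed to reorder the wedge.
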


	\begin{lemma}[{\cite[Lemma 3.1]{Lew23}}]\label{lem::4.3}
	Let $G = (V, E)$ be a graph, and let $X = X(G)$. Let $w : V \to \mathbb{R}_{\geq 0}$. Then, for all $k \geq 0$ and $\sigma \in X_k$,
\[
\Bigg(\sum_{v \in \sigma} \sum_{\substack{u\in V\\ \{u,v\}\in E}} w(u) \Bigg) - \sum_{v \in \operatorname{lk}(X,\sigma)} w(v) \leq k \sum_{v \in V} w(v).
\]	
	\end{lemma}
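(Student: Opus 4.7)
The plan is to reorganize the double sum on the left-hand side by swapping the order of summation, and then bound the coefficient of each $w(u)$ by $k$. First I would rewrite
\[
\sum_{v \in \sigma} \sum_{\substack{u\in V\\ \{u,v\}\in E}} w(u) = \sum_{u \in V} d_\sigma(u) \cdot w(u),
\]
where $d_\sigma(u) := |\{v \in \sigma : \{u,v\} \in E\}|$ counts the number of vertices of $\sigma$ adjacent to $u$ in $G$. With this rewriting, the lemma reduces to proving that, after subtracting $w(u)$ for each $u \in \operatorname{lk}(X,\sigma)$ (viewed as a vertex in the link), the net coefficient of $w(u)$ is at most $k$ for every $u \in V$.

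Next I would partition $V$ into three disjoint sets: the vertices of $\sigma$, the vertices lying in $\operatorname{lk}(X,\sigma)$, and the remaining vertices. For $u \in \sigma$, since $\sigma \in X_k = X(G)_k$ spans a clique of size $k+1$ in $G$ and $G$ is loopless, exactly the other $k$ vertices of $\sigma$ are adjacent to $u$, so $d_\sigma(u) = k$. For $u \in \operatorname{lk}(X,\sigma)$, the flag condition on $X(G)$ tells us that $u$ is adjacent to every vertex of $\sigma$, hence $d_\sigma(u) = k+1$; after subtracting the term $w(u)$ contributed by the link sum, the effective coefficient drops to $k$. For $u$ in the third set, $u \notin \sigma$ and $u \notin \operatorname{lk}(X,\sigma)$; the latter means (again by the flag property) that $u$ fails to be adjacent to at least one vertex of $\sigma$, so $d_\sigma(u) \leq k$.

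Combining the three cases, every $u \in V$ contributes with coefficient at most $k$, and summing gives the required upper bound $k \sum_{v \in V} w(v)$. The argument is essentially bookkeeping on the indicator function of adjacency, and I do not anticipate a serious obstacle. The only conceptual point that has to be handled carefully is the use of the flag property of $X(G)$ in two places: it identifies the vertex set of $\operatorname{lk}(X,\sigma)$ with the set of common neighbors of $\sigma$ in $G$, which is precisely what allows us to convert the saving of $+1$ into a cancellation of $d_\sigma(u) - k = 1$ for those vertices; and it ensures that any vertex outside $\sigma \cup \operatorname{lk}(X,\sigma)$ already satisfies $d_\sigma(u) \leq k$ without further work.
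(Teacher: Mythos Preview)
The paper does not prove this lemma; it is quoted without proof from \cite{Lew23}. Your argument is correct and is the expected one: after swapping the order of summation the left-hand side becomes $\sum_{u\in V}\bigl(d_\sigma(u)-[u\in\operatorname{lk}(X,\sigma)]\bigr)w(u)$, and your three-case analysis---using that $\sigma$ is a $(k{+}1)$-clique and that, in a flag complex, a vertex outside $\sigma$ lies in $\operatorname{lk}(X,\sigma)$ if and only if it is adjacent to every vertex of $\sigma$---shows each coefficient is at most $k$, which together with $w\ge 0$ gives the bound.
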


\renewcommand\proofname{\bf{Proof of Theorem \ref{thm::main3}}}
\begin{proof}
	Let $V'$ and $E'$ denote the vertex set and edge set of $G_{X'}$. By Lemma \ref{lem::2.1}, we have 
\begin{equation}\label{equ::32}
    L_0(X')(u,v) = \begin{cases} 
        \deg_{X'}(u) + 1 & \text{if } u = v, \\
        1 & \text{if } \{u,v\} \notin E', \\
        0 & \text{otherwise},
    \end{cases}
\end{equation}
where $u,v \in V'$. Note that $\mu_0(X') = \lambda_2(G_{X'})$.

		 Let $P$ be the principal submatrix of $L_0(X')^{[k+1]}$ with rows and columns indexed by $X'_k$. From the Cauchy interlacing theorem (cf. \cite[Theorem 4.3.17]{Hor13}) and Lemma \ref{lem::4.2}, we obtain
		\begin{equation}\label{equ::33}
			\lambda_{\min}(P)\geq \lambda_{\min}(L_0(X')^{[k+1]})\geq (k+1)\mu_0(X')=(k+1)\lambda_2(G_{X'}).
		\end{equation}

		For $\sigma, \tau \in X'_k$ with $|\sigma \cap \tau| = k$, denote $(\sigma \setminus \tau) \cup (\tau \setminus \sigma) = \{u, v\}$.  If $\{u, v\} \in E'$, then $\sigma \cup \tau \in X_{k+1} = X'_{k+1}$, since both $\sigma$ and $\tau$ span complete subgraphs of $G_X$ of order $k+1$. Conversely, if $\sigma \cup \tau \in X'_{k+1}$, then $\{u, v\} \in E'$. Therefore, $\{u, v\} \notin E'$ if and only if $\sigma \cup \tau \notin X'_{k+1}$.  By  \eqref{equ::31} and \eqref{equ::32},  for any $\sigma,\tau\in X'_k$, 
		\begin{equation*}\label{equ::matrix_P}
			P(\sigma,\tau)=\begin{cases} 
				k+1+\sum_{u\in\sigma}\deg_{X'}(u) & \mbox{if $\sigma=\tau$}, \\
				(-1)^{\epsilon(\sigma,\tau)} &\mbox{if $|\sigma\cap\tau|=k,~\sigma\cup\tau\notin X'_{k+1} $}, \\
				0 & \mbox{otherwise}.
			\end{cases}
		\end{equation*}
Combining this with \eqref{equ::27}, we obtain 
		\begin{equation*}
			L_k(X,A)=P-R,
		\end{equation*}
where $R$ is the diagonal matrix defined by 
\[
R(\sigma,\sigma) = \sum_{u\in\sigma}\deg_{X'}(u)-\deg_{X'}(\sigma)+|\sigma_{k-1}\cap A_{k-1}|,\quad\text{for } \sigma \in X'_k.
\]  
Therefore, it follows from  the Weyl inequality, \eqref{equ::33} and Lemma \ref{lem::4.3} that
		\begin{equation*}
			\begin{aligned}
				\mu_k(X,A)&\geq \lambda_{\min}(P)-\max_{\sigma\in X'_k}\left(\sum_{u\in\sigma}\deg_{X'}(u)-\deg_{X'}(\sigma)+|\sigma_{k-1}\cap A_{k-1}|\right)\\
				&\geq (k+1)\lambda_2(G_{X'})-\max_{\sigma\in X'_k}\left(\sum_{u\in\sigma}\deg_{X'}(u)-\deg_{X'}(\sigma)\right)-\max_{\sigma\in X'_k}|\sigma_{k-1}\cap A_{k-1}|\\
				&\geq (k+1)\lambda_2(G_{X'})-kn'-\max_{\sigma\in X'_k}|\sigma_{k-1}\cap A_{k-1}|.
			\end{aligned}
		\end{equation*}
Moreover, if 
$$\lambda_2(G_{X'})>\frac{1}{k+1}\left(kn'+\max_{\sigma\in X'_k}|\sigma_{k-1}\cap A_{k-1}|\right),$$ 
then $\mu_k(X,A)>0$, and hence by Theorem \ref{thm::1.2},  we have $H_k(X,A;\mathbb{R})=0$.
		
		This completes the proof. 
	\end{proof}

Next, we present the proof of Theorem \ref{thm::main4}.

\renewcommand\proofname{\bf{Proof of Theorem \ref{thm::main4}}}	
	\begin{proof}
	Let $\boldsymbol{x}$ be an eigenvector of $L_k(X,A)$ with respect to $\mu_k(X,A)$, and let $\boldsymbol{y}$ be the vector defined on $X_k$ given by
\[
\boldsymbol{y}(\sigma) = \begin{cases}
\boldsymbol{x}(\sigma) & \text{if } \sigma \in X_k \setminus A_k, \\
0 & \text{if } \sigma \in A_k.
\end{cases}
\]	
By Lemmas \ref{lem::2.1} and \ref{lem::2.2}, 
		\begin{equation*}
			\begin{aligned}
				\boldsymbol{y}^{\top}L_k(X)\boldsymbol{y}-\boldsymbol{x}^{\top}L_k(X,A)\boldsymbol{x}
				&=\sum_{\sigma\in X_k\setminus A_k}|\sigma_{k-1}\cap A_{k-1}|\boldsymbol{x}^2(\sigma)+\sum_{\substack{\sigma,\tau\in X_k\setminus A_k\\\sigma\cap\tau\in A_{k-1}}}(-1)^{\epsilon(\sigma,\tau)}\boldsymbol{x}(\sigma)\boldsymbol{x}(\tau)\\
				&=\boldsymbol{x}^{\top}M\boldsymbol{x},
			\end{aligned}
		\end{equation*}
		where $M$ is the matrix with rows and columns  indexed by $X_k\setminus A_k$ with entries 
		\begin{equation*}
			M(\sigma,\tau)=\begin{cases} 
				|\sigma_{k-1}\cap A_{k-1}| & \mbox{if $\sigma=\tau$}, \\
				(-1)^{\epsilon(\sigma,\tau)} &\mbox{if $\sigma\cap\tau\in A_{k-1}$}, \\
				0 & \mbox{otherwise},
			\end{cases}
		\end{equation*}
for $\sigma,\tau\in X_k\setminus A_k$. From the Rayleigh quotient theorem (cf. \cite[Theorem 4.2.2]{Hor13}) and the Ger\v{s}gorin circle theorem, we obtain
\begin{equation*}
			\begin{aligned}
				\mu_k(X)-\mu_k(X,A)&\leq
				\frac{\boldsymbol{y}^{\top}L_k(X)\boldsymbol{y}}{\boldsymbol{y}^{\top}\boldsymbol{y}}-\frac{\boldsymbol{x}^{\top}L_k(X,A)\boldsymbol{x}}{\boldsymbol{x}^{\top}\boldsymbol{x}}=\frac{\boldsymbol{x}^{\top}M\boldsymbol{x}}{\boldsymbol{x}^{\top}\boldsymbol{x}}\\
				&\leq\lambda_{\max}(M)\\
				&\leq \max_{\sigma\in X_k\setminus A_k}\left(|\sigma_{k-1}\cap A_{k-1}|+|\{\tau\in X_k\setminus A_k:\tau\cap\sigma\in A_{k-1}\}|\right),
			\end{aligned}
		\end{equation*}
and hence the result follows.
	\end{proof}

Finally, we give the proof of Theorem \ref{thm::main5}.

\renewcommand\proofname{\bf{Proof of Theorem \ref{thm::main5}}}	

\begin{proof}
	Recall that 
\[
B(X) = \{\sigma : \sigma \subseteq \tau \text{ for some } \tau \in X_{d-1} \text{ with } \deg_{X}(\tau) \leq 1\}
\]
is a $(d-1)$-dimensional subcomplex of $X$. Let $N$ be the diagonal matrix indexed by $X_d \setminus B(X)_d = X_d$ with entries
\begin{equation*}
    N(\sigma,\sigma) = |\sigma_{d-1} \cap B(X)_{d-1}|,
\end{equation*}
for each $\sigma \in X_d$. By Lemmas \ref{lem::2.1} and \ref{lem::2.2}, we obtain the decomposition 
\[
L_d(X) = L_d(X,B(X)) + N.
\]
Applying the Weyl inequality yields the following bounds for $\mu_d(X)$:
\begin{equation*}
   \mu_d(X,B(X)) + \min_{\sigma\in X_d} N(\sigma,\sigma) \leq \mu_d(X) \leq \mu_d(X,B(X)) + \max_{\sigma\in X_d} N(\sigma,\sigma).
\end{equation*}
Since $H_d(X,B(X);\mathbb{R})\neq 0$, by  Theorem \ref{thm::1.2},  we have $\mu_d(X,B(X)) = 0$, and hence the result follows. 
\end{proof}

Note that the condition $H_d(X,B(X); \mathbb{R}) \neq 0$ is easy to realize for simplicial complexes appearing in combinatorics. Below we construct an explicit example of a pure $d$-dimensional simplicial complex satisfying this condition. 
\begin{example}
	\rm 
Let $X$ be a simplicial complex such that we can find subcomplexes $X(i)$ for $1 \leq i \leq \ell$ satisfying the following properties:
\begin{enumerate}[(P1)]\setlength{\itemsep}{0pt}
    \item $X = \bigcup_{1 \leq i \leq \ell} X(i)$;
    \item $X(i)$ is homeomorphic to $D^d(i)$ as a topological space;
    \item $X(i) \cap X(j)$ is a subcomplex of dimension at most $d-2$ whenever $i \neq j$,
\end{enumerate}
where $D^d(i)$ is the $i$-th copy of the unit $d$-ball. Then, $X$ is a triangulation of the quotient topological space $\mathcal{X} = \mathcal{Y}/{\sim}$, where $\mathcal{Y}$ is the disjoint union of $D^d(i)$ for $1 \leq i \leq \ell$, and $\sim$ is the equivalence relation induced by the intersection relationships of the $X(i)$ in $X$. Let $\partial(\mathcal{X})$ denote the boundary of the topological space $\mathcal{X}$ (as a subspace of $\mathbb{R}^{d}$); then $B(X)$ corresponds to $\partial(\mathcal{X})$. Since the intersections of any two distinct $\mathcal{X}(i)$ are at most $(d-2)$-dimensional, these intersections lie entirely in $\partial(\mathcal{X})$. Therefore, $\mathcal{X}/\partial(\mathcal{X})\cong \mathcal{Y}/\partial(\mathcal{Y})$, which is homeomorphic to the wedge sum of $D^d(i)/\partial(D^d(i))\cong S^{d}$, where $S^d$ is the $d$-sphere, for $1 \leq i \leq \ell$. By \cite[Proposition 2.22, Corollary 2.25]{Hat02}, we have
\[
H_d(X,B(X);\mathbb{R}) \cong H_d(\mathcal{X},\partial(\mathcal{X});\mathbb{R}) \cong \widetilde{H}_d(\mathcal{X}/\partial(\mathcal{X});\mathbb{R}) \cong \mathbb{R}^\ell,
\]
and hence $H_d(X,B(X);\mathbb{R})\neq 0$.
\end{example}

Let $X$ be a pure $d$-dimensional simplicial complex. Then $X$ is called a \textit{$d$-path of length $m$} if there exists an ordering of its $d$-faces $\sigma_1,\ldots,\sigma_m$ such that $|\sigma_i \cap \sigma_j| = d$ if and only if $|j - i| = 1$. When $\sigma_m$ coincides with $\sigma_1$, we say that $X$ is a \textit{$d$-circuit of length $m$}. Furthermore, $X$ is called a \textit{$d$-star} if its $d$-faces can be arranged in a star-like formation, meaning that all $d$-faces share a common $(d-1)$-face. We say that $X$ is \textit{orientable} if we can assign an orientation to all $d$-faces of $X$ such that any two $d$-faces that intersect in a $(d-1)$-face induce opposite orientations on that face. Note that every $d$-path is orientable.

To conclude this section, we apply Theorem~\ref{thm::main5} to estimate the $d$-th spectral gaps of $d$-paths, $d$-cycles, and $d$-stars (see also \cite[Section~4]{HJ13}).

\begin{corollary} 
\begin{enumerate}[(i)]\setlength{\itemsep}{0pt}
  \item If $X$ is an orientable $d$-circuit, then $\mu_d(X) = d - 1$;
  \item If $X$ is a $d$-path, then $d - 1 \leq \mu_d(X) \leq d$;
  \item If $X$ is a $d$-star, then $\mu_d(X) = d$.
\end{enumerate}
\end{corollary}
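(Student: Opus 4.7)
\medskip

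\noindent\textbf{Proof proposal.} The plan is to apply Theorem \ref{thm::4.4} to each of the three cases, which requires checking two ingredients for each complex $X$: (1) the value of $|\sigma_{d-1} \cap B(X)_{d-1}|$ as $\sigma$ ranges over $X_d$, and (2) that $H_d(X,B(X);\mathbb{R})\neq 0$. Since $X$ is pure of dimension $d$, a $(d-1)$-face $\tau$ lies in $B(X)$ precisely when $\deg_X(\tau)\leq 1$, so counting $|\sigma_{d-1}\cap B(X)_{d-1}|$ reduces to counting how many of the $d+1$ sub-$(d-1)$-faces of $\sigma$ are shared with another $d$-face. The nonvanishing of $H_d(X,B(X);\mathbb{R})$ will come from constructing an explicit relative $d$-cycle whose boundary lies in $C_{d-1}(B(X);\mathbb{R})$.

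\medskip

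\noindent\textbf{Case (i): orientable $d$-circuit.} In the cyclic arrangement $\sigma_1,\ldots,\sigma_m$, each $d$-face $\sigma_i$ has exactly two neighbors (its predecessor and successor in the cycle) and hence shares exactly two of its $(d-1)$-faces with other $d$-faces. Thus $|\sigma_{d-1}\cap B(X)_{d-1}|=(d+1)-2=d-1$ for every $\sigma\in X_d$. For (2), I will use orientability: choose orientations of the $\sigma_i$ so that adjacent faces induce opposite orientations on their shared $(d-1)$-face. Then in $\sum_i [\sigma_i]\in C_d(X,B(X);\mathbb{R})$, every internal shared $(d-1)$-face cancels, while the remaining terms in the boundary are supported on $B(X)_{d-1}$ and therefore vanish in $C_{d-1}(X,B(X);\mathbb{R})$. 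Hence $\sum_i[\sigma_i]$ is a nonzero relative cycle, giving $H_d(X,B(X);\mathbb{R})\neq 0$. Theorem \ref{thm::4.4} then forces $\mu_d(X)=d-1$.

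\medskip

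\noindent\textbf{Case (ii): $d$-path.} The endpoint faces $\sigma_1$ and $\sigma_m$ share a $(d-1)$-face with only one neighbor, yielding $|\sigma_{d-1}\cap B(X)_{d-1}|=d$, while interior faces $\sigma_i$ (for $1<i<m$) share two and give $d-1$. So $\min=d-1$ (assuming $m\geq 3$; the degenerate case $m\leq 2$ gives both bounds equal to $d$, still compatible with the claim), and $\max=d$. A $d$-path is orientable (indeed, any tree-like structure admits such an orientation by induction along the path), so the same argument as in (i) shows that $\sum_i[\sigma_i]$ defines a nonzero class in $H_d(X,B(X);\mathbb{R})$. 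Applying Theorem \ref{thm::4.4} yields $d-1\leq\mu_d(X)\leq d$.

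\medskip

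\noindent\textbf{Case (iii): $d$-star.} Here all $d$-faces contain a common $(d-1)$-face $\tau$, so $\deg_X(\tau)=m\geq 2$ (the case $m=1$ collapses to a single simplex with $\mu_d=d+1$ and is excluded), while every other $(d-1)$-face of any $\sigma_i$ has degree $1$ and lies in $B(X)$. Therefore $|\sigma_{d-1}\cap B(X)_{d-1}|=d$ uniformly, making the two bounds in Theorem \ref{thm::4.4} coincide at $d$. For the nonvanishing, I will compute $H_d(X,B(X);\mathbb{R})$ directly: in $C_d(X,B(X);\mathbb{R})$ the relative boundary map sends each $[\sigma_i]$ to $\pm[\tau]$ (all other terms lie in $B(X)$ and vanish), so with consistent sign choices, differences $[\sigma_i]-[\sigma_j]$ are nonzero relative cycles, giving $H_d(X,B(X);\mathbb{R})\cong\mathbb{R}^{m-1}\neq 0$. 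Theorem \ref{thm::4.4} then gives $\mu_d(X)=d$.

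\medskip

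\noindent The main technical point throughout is the verification of nonvanishing relative homology. For (i) and (ii) the orientability hypothesis supplies a fundamental-class-type cycle in a uniform way, while (iii) requires the small explicit calculation above. Once these are in hand, the counting of $|\sigma_{d-1}\cap B(X)_{d-1}|$ is immediate from the combinatorial definitions, and Theorem \ref{thm::4.4} delivers all three conclusions.
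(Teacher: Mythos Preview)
Your proposal is correct and follows essentially the same approach as the paper: in each case you verify $H_d(X,B(X);\mathbb{R})\neq 0$ via an explicit relative cycle (the oriented sum for circuits and paths, a difference of two facets for stars) and then read off the bounds from Theorem~\ref{thm::4.4}. You actually spell out the computation of $|\sigma_{d-1}\cap B(X)_{d-1}|$ in each case, which the paper leaves implicit, but the argument is otherwise identical.
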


\renewcommand\proofname{\bf{Proof}}
\begin{proof}
Let $X$ be an orientable $d$-circuit. By definition, we can assign an orientation to all $d$-faces of $X$ such that any two $d$-faces intersecting in a $(d-1)$-face induce opposite orientations on that face. Let $\alpha$ be the sum of all these oriented $d$-faces. Then, $\partial_d(\alpha) \in C_{d-1}(B(X); \mathbb{R})$, which implies $H_d(X, B(X); \mathbb{R}) \neq 0$. By Theorem \ref{thm::main5}, we obtain $\mu_d(X) = d - 1$, proving (i). For (ii), the proof is similar to that of (i), and so we omit it. For (iii), we take two distinct $d$-faces $\sigma,\tau\in X$. Then we see that either $\partial_d([\sigma]+[\tau])$ or $\partial_d([\sigma]-[\tau])$ belongs to $C_{d-1}(B(X);\mathbb{R})$, and so $H_d(X,B(X); \mathbb{R}) \neq 0$. By Theorem~\ref{thm::main5}, we immediately deduce that $\mu_d(S) = d$, and the result follows.
\end{proof}

	\section*{Declaration of Interest Statement}
	
	The authors declare that they have no known competing financial interests or personal relationships that could have appeared to influence the work reported in this paper.

	\section*{Acknowledgements}
	
	X. Huang was supported by the National Natural Science Foundation of China (No. 12471324) and the Natural Science Foundation of Shanghai (No. 24ZR1415500). L. Lu was supported by the National Natural Science Foundation of China (Grant No. 12371362) and  the Natural Science Foundation of Hunan Province  (Grant No. 2021JJ40707).
	
	\section*{Data Availability}
	No data was used for the research described in the article.

\end{document}